\documentclass[10pt]{amsart}
\usepackage{verbatim}
\usepackage{eucal,url,amssymb,stmaryrd,enumerate,amscd}
\usepackage{amsmath}
\usepackage[pagebackref]{hyperref}
\usepackage{amsfonts}
\usepackage{amsthm}
\linespread{1.06}
\usepackage[margin=1in]{geometry}

\setcounter{MaxMatrixCols}{10}

\numberwithin{equation}{section}
\newtheorem{thrm}{Theorem}[section]
\newtheorem{lemma}[thrm]{Lemma}
\newtheorem{prop}[thrm]{Proposition}
\newtheorem{cor}[thrm]{Corollary}
\newtheorem{dfn}[thrm]{Definition}

\newtheorem{rmrk}[thrm]{Remark}

\newtheorem{conv}[thrm]{Convention}

\newcommand{\Om}{\Omega}

\newcommand{\QH}{\boldsymbol {G\,(\mathbb{H})}}

\newcommand{\lap}{\triangle}

\newcommand{\norm}[1]{\lVert#1\rVert}

\overfullrule 5pt

 1

\def\gr{\nabla f}
\def\bi{\nabla}
\newcommand{\HessfI}{\sum_{s=1}^3 \left[ g \left(\nabla^2f , \omega_s \right)\right]^2}
\newcommand{\HessfIs}{g(\nabla^2f , \omega_s)}
\newcommand{\vol}{\, Vol_{\eta}}

\begin{document}

\begin{abstract}
The main technical result of the paper is a Bochner type formula for the sub-laplacian on a quaternionic contact manifold. With the help of this formula we establish a version of Lichnerowicz' theorem giving a lower bound of the eigenvalues of the sub-Laplacian under a lower bound on the $Sp(n)Sp(1)$ components of the qc-Ricci curvature. It is shown that in the case of a 3-Sasakian manifold the lower bound is reached iff the quaternionic contact manifold is a round 3-Sasakian sphere. Another goal of the paper is to establish a-priori estimates for square integrals of  horizontal derivatives of smooth compactly supported functions. As an application, we prove a sharp inequality bounding the horizontal Hessian of a function by its sub-Laplacian on the quaternionic Heisenberg group.
\end{abstract}

\keywords{quaternionic contact structures,
qc conformal flatness, qc conformal curvature, Einstein metrics}

\subjclass[2010]{53C26, 53C25, 58J60}

\title[The sharp lower bound of the first eigenvalue]
{The sharp lower bound of the first eigenvalue of the sub-Laplacian on a quaternionic contact manifold}
\date{\today }
\thanks{This work has been ratially funded by}
\author{S. Ivanov}
\address[Stefan Ivanov]{University of Sofia, Faculty of Mathematics and
Informatics, blvd. James Bourchier 5, 1164, Sofia, Bulgaria}
\email{ivanovsp@fmi.uni-sofia.bg}
\author{A. Petkov}
\address[Alexander Petkov]{University of Sofia, Faculty of Mathematics and
Informatics, blvd. James Bourchier 5, 1164, Sofia, Bulgaria}
\email{a\_petkov\_fmi@abv.bg}
\author{D. Vassilev}
\address[Dimiter Vassilev]{ Department of Mathematics and Statistics\\
University of New Mexico\\
Albuquerque, New Mexico, 87131-0001} \email{vassilev@math.unm.edu}
\maketitle \tableofcontents

\setcounter{tocdepth}{2}

\section{Introduction}
  The first circle of results of this paper are motivated by the classical theorems of Lichnerowicz \cite{Li} and Obata \cite{O3} giving correspondingly a lower bound of the first eigenvalue of the Laplacian on a compact manifold with a Ricci  bound and characterizing the case of equality.
 In fact,  in \cite{Li} it was shown that for every compact Riemannian manifold of dimension $n$ for which the Ricci curvature is greater than or equal to that of the round unit $n$-dimensional sphere $S^n(1)$, i.e.,
  $$Ric(X,Y)\geq (n-1)g(X,Y)$$
    we have that the first positive eigenvalue $\lambda_1$ of the (positive) Laplace operator  is greater than or equal to the first eigenvalue of the sphere, $$\lambda_1\geq n.$$ Subsequently in \cite{O3} it was shown that the lower bound for the eigenvalue is achieved iff the Riemannian manifold is isometric to $S^n(1)$.
Lichnerowicz proved his result using the classical Bochner-Weitzenb\"ock formula. In turn, Obata showed that under these assumptions the eigenfunction $\phi$ satisfies the system $\nabla^2 \phi = -\phi g$, after which he defines an isometry using analysis based on the geodesics and Hessian comparison of the distance function from a point. Later Gallot \cite{Gallot79} generalized these results to statements involving the higher eigenvalues and corresponding eigenfunctions of the Laplace operator.

 It is natural to ask if there is a sub-Riemannian version of the above results. Greenleaf \cite{Gr} gave a version of Lichnerowicz' result
 on a compact strongly pseudo-convex CR manifold.
 Suppose $M$ is $2n+1$, $n\geq 3$ dimensional strongly pseudo-convex CR manifold. If $$Ric(X,Y)+ 4A(X,JY)\geq (n+1) g(X,X)$$ for all horizontal vectors $X$, where $Ric$ and $A$ are, correspondingly, the Ricci curvature and the Webster torsion of the Tanaka-Webster connection (in the notation from \cite{IVZ,IV2}), then the first positive eigenvalue $\lambda_1$ of the  sub-Laplacian satisfies the inequality $\lambda_1\geq  {n}$. The standard CR structure on the sphere achieves equality in this inequality.
  Further results in the CR case have been proved in \cite{LL}, \cite{Chi06}, \cite{CC07,CC09b,CC09a}, \cite{Bar} and \cite{ChW} adding a corresponding inequality for $n=1$, or characterizing the equality case in the vanishing torsion case  (the Sasakian case).

One purpose of this paper is to consider these questions in the setting of a closed compact quaternionic contact manifold. The Lichnerowicz type result is as follows.
\begin{thrm}\label{main1}
Let $(M,g,\mathbb Q)$ be a compact quaternionic contact manifold of dimension $4n+3>7$. If the Ricci tensor and torsion of the Biquard connection satisfy the inequality
\begin{equation}\label{condm}
Ric(X,X)+\frac{2(4n+5)}{2n+1}T^0(X,X)+ \frac{3(2n^2+5n-1)}{(n-1)(2n+1)}U(X,X)\ge k_0g(X,X)
\end{equation}
for some positive constant $k_0$ then any positive eigenvalue $\lambda$ of the sub-Laplacian $\triangle$ satisfies the inequality
$$\lambda \ge \frac{n}{n+2}k_0.$$
\end{thrm}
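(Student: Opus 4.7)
The strategy is to follow Lichnerowicz's Bochner integration argument, adapted to the sub-Riemannian setting by means of the qc Bochner formula for the sub-Laplacian that is advertised in the abstract as the main technical result of the paper. Let $\phi$ be a smooth function with $\triangle\phi = \lambda\phi$, $\lambda > 0$, and write $\nabla\phi$ for its horizontal gradient. I would first apply the qc Bochner identity pointwise to $\phi$, integrate the resulting formula over the compact manifold $M$ so that the divergence on the left-hand side drops out, and substitute the eigenvalue equation to rewrite the cross term $\int\langle\nabla(\triangle\phi),\nabla\phi\rangle$ as $\lambda\int|\nabla\phi|^2 = \lambda^2\int\phi^2$. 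This produces an integral identity in which the squared horizontal Hessian $\int|\nabla^2\phi|^2$, the quadratic form in $\nabla\phi$ built from the $Sp(n)Sp(1)$ components of the qc Ricci and torsion --- precisely the combination that appears on the left of \eqref{condm} --- and certain extra terms involving the three vertical derivatives $\xi_s\phi$, which the non-trivial vertical commutators of the Biquard connection force into the formula, are all balanced against $\lambda^2\int\phi^2$.

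With this integral identity in hand, I would invoke the hypothesis \eqref{condm} to bound the curvature/torsion contribution from below by $k_0\lambda\int\phi^2$, and then bound the horizontal Hessian term from below by a sharp multiple of $\lambda^2\int\phi^2$. A purely algebraic Cauchy--Schwarz applied to the trace relation $\mathrm{tr}(\nabla^2\phi)=\triangle\phi$ in the $4n$-dimensional horizontal distribution gives only $|\nabla^2\phi|^2 \geq (\triangle\phi)^2/(4n)$, which is too weak to recover the constant $n/(n+2)$; the sharp estimate must come from coupling the symmetric and antisymmetric parts of the horizontal Hessian to the vertical terms, using that on a qc manifold the antisymmetric part is determined by the $\xi_s\phi$'s through the qc structure equations. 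Rearranging the resulting inequality, I expect to arrive at an inequality of the schematic form $0 \leq \lambda^2 \int\phi^2 - \alpha\,\lambda^2\int\phi^2 - k_0\lambda\int\phi^2$ with $1 - \alpha = \tfrac{n+2}{n}$ accounting precisely for the combined horizontal-plus-vertical contribution, from which $\lambda \geq \frac{n}{n+2}k_0$ follows after dividing by $\lambda \int\phi^2 > 0$.

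The main obstacle is the sharp bookkeeping of the vertical contributions. The qc Bochner formula carries unavoidable terms of the form $(\xi_s\phi)^2$ and mixed horizontal/vertical second derivatives, and the peculiar coefficients $\frac{2(4n+5)}{2n+1}$ and $\frac{3(2n^2+5n-1)}{(n-1)(2n+1)}$ appearing in \eqref{condm} signal that a delicate linear combination of auxiliary identities must be formed in order to absorb these vertical terms cleanly into the curvature/torsion quadratic form. Concretely I would seek a secondary Bochner-type identity --- perhaps obtained by applying the qc Bochner formula to $\sum_s(\xi_s\phi)^2$, or by an integration by parts that uses the commutation relations between $\xi_s$ and $\triangle$ on a qc manifold --- relating $\int (\xi_s\phi)^2$ to $\int|\nabla\phi|^2$ and $\int(\triangle\phi)^2$. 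Combining this secondary identity with the primary Bochner identity and tuning the combination to match the hypothesis \eqref{condm} is the technical heart of the argument; the factor $(n-1)$ in the denominator and the dimensional restriction $n\geq 2$ (i.e. $\dim M = 4n+3 > 7$) are direct reflections of the degeneration of this combination when $n=1$.
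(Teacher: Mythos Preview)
Your outline is correct and matches the paper's strategy in its essentials: integrate the qc Bochner formula, feed in the eigenvalue equation, bound the curvature/torsion quadratic form via the hypothesis, and control the Hessian and vertical contributions sharply.

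Where the paper is more specific than your sketch is in the mechanism that produces the sharp constant. Rather than the symmetric/antisymmetric split you propose, the paper decomposes the horizontal Hessian into its $Sp(n)Sp(1)$-invariant components $(\nabla^2 f)_{[3]}$ and $(\nabla^2 f)_{[-1]}$, and then develops \emph{two} auxiliary integral identities for the mixed term $\int_M\sum_s\nabla^2 f(\xi_s,I_s\nabla f)$: one (Lemma~\ref{gr1}) comes from computing the divergence of $\sum_s\nabla^2 f(I_sX,I_s e_a)\,df(e_a)$ and expresses the mixed term as $\frac{3}{4n}|(\nabla^2 f)_{[3]}|^2-\frac{1}{4n}|(\nabla^2 f)_{[-1]}|^2$ plus a $\tau_s$-curvature term; the other (Lemma~\ref{gr2}) comes from the divergence of $\sum_s df(I_sX)\,df(\xi_s)$ and gives a formula involving $-4n\sum_s(df(\xi_s))^2$ and a torsion term. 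The paper then writes the mixed term as $(1-c)\cdot(\text{Lemma }\ref{gr1})+c\cdot(\text{Lemma }\ref{gr2})$ with a free parameter $c$ (the Greenleaf trick), applies the elementary projections $|(\nabla^2 f)_{[3]}|^2\geq\frac{1}{4n}(\triangle f)^2$ and $|(\nabla^2 f)_{[-1]}|^2\geq 4n\sum_s(df(\xi_s))^2$, and finally chooses $c=\frac{n-1}{4n-1}$ to kill the vertical contribution. Your ``secondary Bochner-type identity'' is precisely Lemma~\ref{gr2}; what your sketch misses is that a \emph{third} ingredient (Lemma~\ref{gr1}) is needed as well, and that it is the interplay of the two via the parameter $c$ that forces the specific coefficients in \eqref{condm} and makes the $(n-1)$ in the denominator appear.
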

The second goal then is to investigate the case of equality in Theorem~\ref{main1}. We restrict our considerations to the case when the torsion of the Biquard connection vanishes, $T^0=U=0$. In this case it is known \cite{IMV} that the qc manifold is qc-Einstein, $Ric=k.g$, the qc-scalar curvature is constant $(n>1)$ and if it is positive then the qc manifold is locally qc equivalent to a 3-Sasakian space. The corresponding result in the negative scalar curvature case can be found in \cite{IV1} and \cite{IV2}. We prove the following result.
\begin{thrm}\label{main0}
Let $(M,g,\mathbb Q)$ be a compact qc-Einstein manifold of dimension $4n+3>7$ of  qc scalar curvature $Scal =16n(n+2)$,
$$Ric(X,Y)=\frac1{4n}Scal g(X,Y)=4(n+2)g(X,Y).
$$
The first positive eigenvalue $\lambda_1$ of the sub-Laplacian equals $4n$
if and only if  $(M,g,\mathbb Q)$  is qc equivalent to the 3-Sasakian sphere of dimension $4n+3$.

In particular, on a 3-Sasakian manifold of dimension (4n+3), $n>1$, the first positive eigenvalue of the sub-laplacian is equal to $4n$ if and only if the 3-Sasakian manifold is qc-equivalent to the 3-Sasakian sphere.
\end{thrm}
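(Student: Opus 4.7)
The plan is to derive the geometric consequences of the equality case in Theorem~\ref{main1} and then invoke a qc-analogue of Obata's rigidity theorem. In the qc-Einstein setting with $T^0=U=0$ and $Scal=16n(n+2)$, the hypothesis \eqref{condm} holds with $k_0=4(n+2)$, and $\frac{n}{n+2}k_0=4n$ matches the prescribed eigenvalue exactly. Thus Theorem~\ref{main1} is sharp in this setting, and the remaining task is to identify the extremal manifolds.

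First I would revisit the proof of Theorem~\ref{main1}, applied to a fixed eigenfunction $\phi$ of $\triangle$ with eigenvalue $4n$. That proof rests on integrating the qc-Bochner formula (the main technical tool of the paper) against $\phi$, applying a Cauchy--Schwarz-type inequality to relate $(\triangle\phi)^2$ to the norm-square of the horizontal Hessian, and controlling the curvature and torsion contributions via \eqref{condm}. In our equality setting every step of this chain must saturate: the Cauchy--Schwarz step forces a pointwise identity on the trace-free part of the horizontal Hessian, while the qc-Einstein hypothesis $T^0=U=0$ annihilates the torsion remainders. I expect the outcome to be the system
\begin{equation*}
\nabla^2\phi(X,Y)\;=\;-\phi\, g(X,Y),\qquad X,Y\in H,
\end{equation*}
supplemented by compatibility relations that pin down the vertical derivatives of $\phi$ and their interaction with the three almost complex structures of the quaternionic structure.

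The main obstacle is precisely this equality-case extraction. The qc-Bochner identity contains several curvature and torsion tensors, and one must verify that each is either annihilated by $T^0=U=0$ or absorbed into the Ricci term bounded by \eqref{condm}, so that no slack remains and the PDE system above is forced with no remainder. This is a delicate bookkeeping exercise in the notation of the paper. A secondary difficulty is the correct handling of the Reeb directions: on the $3$-Sasakian sphere the first eigenfunctions are the restrictions of ambient linear functions from $\mathbb{R}^{4n+4}$, and their vertical behaviour is nontrivial and must be recovered as part of the extremality conditions.

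Once the PDE system is in hand, the conclusion follows from a qc-Obata type rigidity theorem: a non-constant smooth function satisfying the horizontal Hessian equation above on a compact qc-Einstein manifold of positive qc-scalar curvature $Scal=16n(n+2)$ forces qc-equivalence to the round $3$-Sasakian sphere $S^{4n+3}$. Sharpness is confirmed on the sphere by a direct check that the restrictions of the ambient linear functions are eigenfunctions of $\triangle$ with eigenvalue $4n$ and satisfy the system. The final sentence of the theorem is then immediate: any $3$-Sasakian manifold has $T^0=U=0$ and, by \cite{IMV}, constant qc-scalar curvature; after the standard homothetic normalization of the qc contact form this constant equals $16n(n+2)$, so the first part of the theorem applies and yields the conclusion.
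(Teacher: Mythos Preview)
Your strategy is reasonable in spirit but diverges substantially from the paper's proof, and it rests on a result that is not established in this paper.

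The paper does \emph{not} prove or invoke a qc-Obata theorem. Instead, it reduces the problem to the classical Riemannian Lichnerowicz--Obata theorems for the extended metric $g$. The key steps are: (i) a formula (Lemma~\ref{lapl}) relating the Riemannian Laplacian $\triangle^g$ and the sub-Laplacian $\triangle$, leading to the variational inequality $\mu\le\lambda+\int_M\sum_s(df(\xi_s))^2\,\vol$ for the first Riemannian eigenvalue $\mu$; (ii) a computation, using Lemmas~\ref{gr1} and~\ref{gr2} in the qc-Einstein case with $S=2$, giving $\int_M\sum_s(df(\xi_s))^2\,\vol=3$ when $f$ is normalized; (iii) the conclusion $\mu\le 4n+3$. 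Since a qc-Einstein manifold of positive qc-scalar curvature is locally $3$-Sasakian and hence Riemannian Einstein with $Ric^g=(4n+2)g$, Lichnerowicz gives $\mu\ge 4n+3$, so $\mu=4n+3$, and Obata's Riemannian theorem yields the sphere.

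Your route, by contrast, would extract the horizontal Hessian equation from the equality case of Theorem~\ref{main1} (which the paper records only as a remark, with the correct form $\nabla^2 f(X,Y)=-f\,g(X,Y)-\sum_s df(\xi_s)\,\omega_s(X,Y)$; note the skew term you allude to is essential and not merely a ``compatibility relation'') and then appeal to a qc-rigidity statement asserting that such a function forces the $3$-Sasakian sphere. That rigidity theorem is not proved here; establishing it directly is considerably more involved than the paper's argument. So as written your proof has a genuine gap: the step ``the conclusion follows from a qc-Obata type rigidity theorem'' is a black box that this paper does not supply. The paper's approach is more elementary precisely because it sidesteps this by passing to the Riemannian Laplacian.
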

We note that in \cite{IMV2} is given an explicit formula for the eigenfunctions of the above eigenvalue, see also \cite{ACB}.


The second main theme of the paper is the  derivation of an a-priori inequality between the (horizontal) Hessian and sub-Laplacian. For the Heisenberg group a corresponding sharp estimate was found in \cite{DoMa05}. Equipped with our estimate we precise the scope of use of \cite{Do08} for the quaternionic Heisenberg group. We recall that the main application is the establishment of the  $\mathcal{C}^{1,\alpha}$ regularity for the $p$ sub-Laplacian with $p$ close to 2. The exact interval for $p$ around 2 is determined by the constant found in this paper. Using Bochner's identity we will find an integral identity. Such integral identities have been exploited earlier in \cite{Gar08}  in the setting of Carnot groups.  A similar method based on Greenleaf's formula was employed in \cite{ChMa10}, but due to the different quaternionic linear algebra our proof proceeds in a way particular to the quaternionic case. Here we prove the  following  result.
\begin{thrm}\label{t:lapl-hess est}
Let $(M, \eta)$ be a $(4n+3)$-dimensional qc manifold, $n>1$. For any $f\in \mathcal{C}_o^\infty (M)$ the following inequality holds true
\begin{multline}\label{e:lapl-hess est}
\int_M \vert \lap f \vert^2\vol \geq \frac {n}{n+1}\int_M \vert \bi^2 f\vert^2\vol
+ \frac {n^2}{n^2-1}\int_M Ric(\gr,\gr) \vol\\
+  \frac {n^2}{n^2-1} \int_M -\frac {4}{n}T^0(\gr, \gr)-6U(\gr,\gr) -6 S|\gr|^2 \vol\\
=\frac {n}{n+1}\int_M \vert \bi^2 f\vert^2\vol
+ \int_M \frac {2n(n+2)}{n+1}T^0(\gr, \gr) + \frac {4n^2}{n-1}U(\gr,\gr) +\frac {2n^2}{n+1}S|\gr|^2 \vol.
\end{multline}
\end{thrm}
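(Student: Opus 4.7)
The plan is to apply the qc Bochner formula (the main technical result of the paper) to $f\in\mathcal{C}_o^\infty(M)$, integrate over $M$ and use integration by parts to obtain an integrated identity, and then invoke a second integral identity derived from the Biquard commutator relations to eliminate the vertical-derivative terms; what remains is precisely \eqref{e:lapl-hess est}.

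Concretely, starting from the qc Bochner formula, $\tfrac12\lap|\bi f|^2$ is expressed as the sum of $|\bi^2 f|^2$, a cross term $\langle\bi f,\bi\lap f\rangle$, the Ricci/torsion/scalar contractions evaluated on $\gr$, and vertical corrections involving the three Reeb derivatives $\xi_s f$ for $s=1,2,3$. Integrating over $M$, the divergence term $\int_M \lap|\bi f|^2\vol$ vanishes because $f$ has compact support, and integration by parts turns $\int_M\langle\bi f,\bi\lap f\rangle\vol$ into $-\int_M(\lap f)^2\vol$. This produces a first integral identity of the schematic form
\begin{equation*}
\int_M (\lap f)^2 \vol = \int_M |\bi^2 f|^2 \vol + \int_M [\text{Ricci, torsion terms}]\vol + \sum_{s=1}^3 c_s \int_M (\xi_s f)^2 \vol.
\end{equation*}
For the second identity, the Biquard commutator relations read schematically $[\bi_\alpha,\bi_\beta]f = 2\sum_s \omega_s(e_\alpha,e_\beta)\xi_s f + \text{torsion}$, so pairing against $\omega_s$, squaring, and integrating by parts once more expresses $\sum_s\int_M(\xi_s f)^2\vol$ as a linear combination of $\int_M|\bi^2 f|^2\vol$, $\int_M(\lap f)^2\vol$, and further Ricci/torsion integrals, with $n$-dependent coefficients coming from the $Sp(n)Sp(1)$-decomposition of $\mathbb{R}^{4n}\otimes\mathbb{R}^{4n}$. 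Combining the two identities linearly eliminates the vertical integrals and yields \eqref{e:lapl-hess est} with the explicit constants $\tfrac{n}{n+1}$ and $\tfrac{n^2}{n^2-1}$; notice that $\tfrac{n^2}{n^2-1}=\tfrac{n}{n+1}\cdot\tfrac{n}{n-1}$ is consistent with one factor coming from each of the two identities, and also explains the necessity of the restriction $n>1$.

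The main technical obstacle is the bookkeeping for this elimination. Since $\lap$ does not control $\xi_s f$ pointwise, the vertical integrals cannot simply be discarded; only the combination of Bochner with the commutator identity, after careful integration by parts, produces the precise $n$-dependent cancellations that give the sharp coefficient $\tfrac{n}{n+1}$. As remarked in the introduction, the quaternionic algebra forces a bookkeeping different from the CR argument of \cite{ChMa10}: the horizontal Hessian must be decomposed with respect to its commutation with all three almost complex structures $I_1,I_2,I_3$ simultaneously, and the multiplicities of the irreducible summands in the $Sp(n)Sp(1)$-invariant splitting of the space of horizontal second derivatives are ultimately responsible for the rational coefficients appearing in the final inequality.
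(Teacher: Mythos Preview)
Your outline has a genuine gap: the result is an \emph{inequality}, yet you describe only the linear combination of two integral \emph{identities}. Nowhere do you say which non-negative quantity is being dropped, and without that step no $\geq$ can appear. In the paper the inequality enters through the projection estimate
\[
|(\nabla^2 f)_{[3]}|^2 \ \geq\ \frac{1}{4n}\,(\lap f)^2,
\]
applied not to all of $|(\nabla^2 f)_{[3]}|^2$ but only to a fraction $(1-c)$ of it. After the integrated Bochner formula is combined with Lemma~\ref{gr1}, the $[3]$ and $[-1]$ components of the Hessian carry unequal coefficients $1+3/n$ and $1-1/n$; splitting the $[3]$-term, using the projection inequality on the $(1-c)$ piece, and then choosing $c=(n-4)/n$ equalises the two coefficients to $(n-1)/n$ and reconstitutes the full $|\nabla^2 f|^2$. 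This is exactly the step that produces the sharp constant $n/(n+1)$, and it is absent from your proposal.

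A second, related inaccuracy: the vertical quantities $\int_M(\xi_s f)^2\vol$ never enter the argument for this theorem. The mixed term in the integrated Bochner formula is $\int_M\sum_s\nabla^2 f(\xi_s,I_s\nabla f)\vol$, and the paper handles it entirely via Lemma~\ref{gr1}, which rewrites it in terms of $|(\nabla^2 f)_{[3]}|^2$, $|(\nabla^2 f)_{[-1]}|^2$ and the $\tau_s$-forms. Lemma~\ref{gr2} (the one that does bring in $(df(\xi_s))^2$) is used for Theorem~\ref{main1} but not here. So your plan of introducing $(\xi_s f)^2$ via the commutator and then eliminating it by a second identity is a detour that, as written, still leaves you with an identity rather than the desired inequality.
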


The proofs of the last  Theorem is presented in Section \ref{s:cordes}.

\begin{conv}
\label{conven} \hfill\break\vspace{-15pt}
\begin{enumerate}
\item[a)] We shall use $X,Y,Z,U$ to denote horizontal vector fields, i.e.
$X,Y,Z,U\in H$.
\item[b)] $\{e_1,\dots,e_{4n}\}$ denotes a  local orthonormal basis of the horizontal
space $H$.
\item[c)] The summation convention over repeated vectors from the basis $%
\{e_1,\dots,e_{4n}\}$ will be used. For example, for a (0,4)-tensor $P$, the
formula $k=P(e_b,e_a,e_a,e_b)$ means
$$k=\sum_{a,b=1}^{4n}P(e_b,e_a,e_a,e_b);$$
\item[d)] The triple $(i,j,k)$ denotes any cyclic permutation of $(1,2,3)$.
\item[e)] $s$  will be any number from the set $\{1,2,3\}, \quad
s\in\{1,2,3\}$.
\end{enumerate}
\end{conv}

\textbf{Acknowledgments}
The research is  partially supported by the Contract 181/2011
with the University of Sofia `St.Kl.Ohridski' and Contract ``Idei", DID 02-39/21.12.2009. . S.I and D.V. are
partially supported by Contract ``Idei", DO 02-257/18.12.2008.

\section{Quaternionic contact manifolds}

In this section we will briefly review the basic notions of
quaternionic contact geometry and recall some results from
\cite{Biq1}, \cite{IMV} and \cite{IV} which we will use in this
paper.

\subsection{Quaternionic contact structures and the Biquard connection}

A quaternionic contact (qc) manifold $(M, g, \mathbb{Q})$ is a
$4n+3$-dimensional manifold $M$ with a codimension three
distribution $H$  locally given as the kernel of a 1-form
$\eta=(\eta_1,\eta_2,\eta_3)$ with values in $\mathbb{R}^3$. In addition $H$ has
an $Sp(n)Sp(1)$ structure, that is, it is
equipped with a Riemannian metric $g$ and a rank-three bundle
$\mathbb Q$ consisting of
endomorphisms of $H$ locally generated
by three almost complex structures $I_1,I_2,I_3$ on $H$ satisfying
the identities of the imaginary unit quaternions,
$I_1I_2=-I_2I_1=I_3, \quad I_1I_2I_3=-id_{|_H}$ which are
hermitian compatible with the metric $g(I_s.,I_s.)=g(.,.)$ and the following
compatibility condition holds
$\qquad 2g(I_sX,Y)\ =\ d\eta_s(X,Y), \quad X,Y\in H.$

A special phenomena, noted in \cite{Biq1}, is that the contact
form $\eta$ determines the  quaternionic structure and the metric on
the horizontal distribution in a unique way.

If the first Pontryagin class of $M$ vanishes then the 2-sphere bundle of
$\mathbb{R}^3$-valued 1-forms is trivial \cite{AK}, i.e. there is a
globally defined form $\eta$ that anihilates $H$, we denote the
corresponding qc manifold $(M,\eta)$. In this case the 2-sphere of
associated almost complex structures is also globally defined on
$H$.

On a qc manifold with a fixed metric $g$ on $H$ there exists a
canonical connection defined in \cite{Biq1} when the dimension $(4n+3)>7$,
and in \cite{D} for the 7-dimensional case.
\begin{thrm}\label{biqcon}.\cite{Biq1} {Let $(M, g,\mathbb{Q})$ be a qc
manifold} of dimension $4n+3>7$ and a fixed metric $g$ on $H$ in
the conformal class $[g]$. Then there exists a unique connection
$\nabla$ with
torsion $T$ on $M^{4n+3}$ and a unique supplementary subspace $V$ to $H$ in
$TM$, such that:
\begin{enumerate}
\item[i)]
$\nabla$ preserves the decomposition $H\oplus V$ and the $Sp(n)Sp(1)$ structure on $H$,
i.e. $\nabla g=0,  \nabla\sigma \in\Gamma(\mathbb Q)$ for a section
$\sigma\in\Gamma(\mathbb Q)$, and its torsion on $H$ is given by $T(X,Y)=-[X,Y]_{|V}$;
\item[ii)] for $\xi\in V$, the endomorphism $T(\xi,.)_{|H}$ of $H$ lies in
$(sp(n)\oplus sp(1))^{\bot}\subset gl(4n)$;
\item[iii)]
the connection on $V$ is induced by the natural identification $\varphi$ of
$V$ with the subspace $sp(1)$ of the endomorphisms of $H$, i.e.
$\nabla\varphi=0$.
\end{enumerate}
\end{thrm}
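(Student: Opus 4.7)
The plan is to establish existence and uniqueness of the Biquard connection in four stages, corresponding to the pieces of data that must be pinned down: the supplementary vertical subspace $V$, and then the restrictions of $\nabla$ to $H\times H\to H$, to $V\times H\to H$, and to $V\times V\to V$.

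First I would isolate a canonical triple of Reeb vector fields $\xi_1,\xi_2,\xi_3$ and set $V:=\mathrm{span}\{\xi_1,\xi_2,\xi_3\}$; these are characterized by the normalization $\eta_s(\xi_t)=\delta_{st}$ together with the compatibility conditions $d\eta_s(\xi_s,X)=0$ and $d\eta_s(\xi_t,X)+d\eta_t(\xi_s,X)=0$ for every $X\in H$. At each point these are linear conditions on the horizontal components of the $\xi_s$; their unique solvability follows by decomposing the relevant pieces of $\Lambda^2 H^*$ under the $Sp(n)Sp(1)$-action and verifying that the obstruction sits in an isotypic component which is absent once $n\geq 2$. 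This is precisely where the hypothesis $4n+3>7$ enters, and explains why Duchemin needed an additional integrability assumption in dimension seven.

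Once $V$ is fixed, I would determine $\nabla$ on horizontal vectors by a Koszul-type argument. From $T(X,Y)=-[X,Y]|_V$ one extracts $\nabla_X Y-\nabla_Y X=[X,Y]|_H$, and combining this with $\nabla g=0$ and cyclic permutation yields
\[
2g(\nabla_X Y,Z)=Xg(Y,Z)+Yg(X,Z)-Zg(X,Y)+g([X,Y]|_H,Z)-g([Y,Z]|_H,X)+g([Z,X]|_H,Y),
\]
which determines $\nabla_X Y$ uniquely. Existence follows by adopting this formula as the definition, after which one must verify that the induced connection $1$-form on $H$ takes values in $sp(n)\oplus sp(1)$, i.e.\ that the rank-three bundle $\mathbb Q$ is preserved; this uses the defining relation $2g(I_sX,Y)=d\eta_s(X,Y)$ to identify the $sp(1)$-component of the connection form.

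For $\xi\in V$ and $X\in H$, the axioms $\nabla g=0$ and $\nabla\mathbb Q\subseteq\mathbb Q$ force $\nabla_\xi|_H$ to act on $H$ as an element of $sp(n)\oplus sp(1)$, while (ii) imposes $T(\xi,\cdot)|_H\in (sp(n)\oplus sp(1))^\perp$. Writing $\nabla_\xi X=[\xi,X]|_H+T(\xi,X)|_H$ and applying the orthogonal splitting $\mathrm{End}(H)=(sp(n)\oplus sp(1))\oplus(sp(n)\oplus sp(1))^\perp$ uniquely pins down both summands, yielding existence and uniqueness simultaneously. Finally, (iii) defines $\nabla_A\xi_s$ for any $A\in TM$ by pulling back the $sp(1)$-component of the horizontal connection form through the identification $\varphi\colon V\to sp(1)\subset\mathrm{End}(H)$, $\xi_s\mapsto I_s$, via $\nabla\varphi=0$. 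The main obstacle is the first stage: the remaining three stages are essentially formal once $V$ is canonical and mirror the construction of the Levi-Civita connection adapted to an $Sp(n)Sp(1)$-reduction, but the existence and uniqueness of the Reeb triple rests on a genuine representation-theoretic argument that is also the source of the dimensional restriction.
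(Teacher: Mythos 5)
The paper offers no proof of Theorem~\ref{biqcon} to compare against: the statement is quoted from Biquard \cite{Biq1}, with the seven-dimensional case due to Duchemin \cite{D} under the additional hypothesis \eqref{bi1}. Measured against Biquard's actual argument, your four-stage architecture is the correct one, and your stages two through four are indeed essentially the formal arguments you describe: the partial Koszul formula pins down $\nabla$ on $H\times H$, the splitting $gl(4n)=(sp(n)\oplus sp(1))\oplus(sp(n)\oplus sp(1))^{\bot}$ together with the Leibniz normalization $\nabla_\xi X=[\xi,X]_{|H}+T(\xi,X)_{|H}$ pins down the vertical derivatives and the torsion endomorphism, and $\nabla\varphi=0$ transports the $sp(1)$-connection form to $V$, recovering \eqref{xider}. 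You also correctly locate the dimensional restriction in the existence of the Reeb triple, which is consistent with the paper's own remark that in dimension seven \eqref{bi1} must be assumed.

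The genuine gap is in your first stage, which is where the theorem has its real content. Writing $\xi_s=\xi_s^0+X_s$ with $X_s\in H$ for an arbitrary initial complement, the three diagonal conditions $(\xi_s\lrcorner d\eta_s)_{|H}=0$ read $2g(I_sX_s,\cdot)=-(\xi_s^0\lrcorner d\eta_s)_{|H}$ and, by nondegeneracy of $\omega_s$ on $H$, determine each $X_s$ uniquely and pointwise \emph{in every dimension}; so uniqueness of $V$ is elementary and has nothing to do with $n\geq 2$. The hard part is existence: once the $X_s$ are forced, the three off-diagonal conditions $(\xi_i\lrcorner d\eta_j)_{|H}=-(\xi_j\lrcorner d\eta_i)_{|H}$ become constraints on data that is already fixed --- the system is overdetermined (six horizontal $1$-form equations in three horizontal unknowns), and its solvability cannot follow from a pointwise isotypic decomposition of $\Lambda^2H^*$, since decomposing the target of a non-surjective affine map does not make it surjective. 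Biquard's existence proof shows that the obstruction vanishes for $n>1$ by exploiting the constraints the qc structure itself places on the mixed components of $d\eta_s$ (via the compatibility $d\eta_{s|H}=2\omega_s$ and its differential consequences), fed through the $Sp(n)Sp(1)$-module analysis; it is this finer step that fails for $n=1$, which is why Duchemin must assume \eqref{bi1} outright rather than remove an ``absent isotypic component.'' A second, smaller omission: in stage two you note but do not carry out the verification that the Koszul-defined partial connection preserves $\mathbb{Q}$; that check is a genuine computation resting on $2g(I_sX,Y)=d\eta_s(X,Y)$, not a formality. In short: right skeleton, but the two steps where the theorem is actually nontrivial are asserted rather than proved.
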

In ii), the inner product $<,>$ of $End(H)$ is given by $<A,B> = {
\sum_{i=1}^{4n} g(A(e_i),B(e_i)),}$ for $A, B \in End(H)$. We shall call the above connection \emph{the Biquard connection}.
When the dimension of $M$ is at least eleven \cite{Biq1} also described the supplementary distribution $V$, which is (locally) generated by the so called Reeb vector fields  $\{\xi_1,\xi_2,\xi_3\}$ determined by
\begin{equation}  \label{bi1}
\begin{aligned} \eta_s(\xi_k)=\delta_{sk}, \qquad (\xi_s\lrcorner
d\eta_s)_{|H}=0,\\ (\xi_s\lrcorner d\eta_k)_{|H}=-(\xi_k\lrcorner
d\eta_s)_{|H}, \end{aligned}
\end{equation}
where $\lrcorner$ denotes the interior multiplication.  If the dimension of $M$ is seven Duchemin shows in \cite{D} that if we assume, in addition, the existence of Reeb vector fields as in \eqref{bi1}, then Theorem~\ref{biqcon} holds. Henceforth, by a qc structure in dimension $7$ we shall mean a qc structure satisfying \eqref{bi1}.

The qc conformal curvature tensor $W^{qc}$, introduced  in \cite{IV},
is the obstruction for a qc structure to be locally qc
conformal to the flat structure on the quaternionic Heisenberg group $
\boldsymbol{G\,(\mathbb{H})}$.
A qc conformally flat structure is also locally qc conformal to
the standard 3-Sasaki sphere due to the local qc conformal equivalence of
the standard 3-Sasakian structure on the $(4n+3)$-dimensional sphere
and the quaternionic Heisenberg group \cite{IMV,IV}.

Notice that equations \eqref{bi1} are invariant under the natural
$SO(3)$
action. Using the triple of Reeb vector fields we extend $g$ to a metric on
$M$ by requiring
$span\{\xi_1,\xi_2,\xi_3\}=V\perp H \text{ and }
g(\xi_s,\xi_k)=\delta_{sk}.
$
\hspace{2mm} \noindent The extended metric does not depend on the action of $SO(3)$ on $V$, but it changes in an obvious manner if $\eta$ is multiplied by a conformal factor. Clearly, the Biquard connection
preserves the extended metric on $TM, \nabla g=0$. Since the Biquard connection is metric it is connected with the Levi-Civita connection $\nabla^g$ of the metric $g$ by the general formula
\begin{equation}  \label{lcbi}
g(\nabla_AB,C)=g(\nabla^g_AB,C)+\frac12\Big[
g(T(A,B),C)-g(T(B,C),A)+g(T(C,A),B)\Big].
\end{equation}

The covariant derivative of the qc structure with respect to the Biquard connection and the covariant derivative of the distribution $V $ are given by
\begin{equation}\label{xider}
\nabla I_i=-\alpha_j\otimes I_k+\alpha_k\otimes I_j,\quad
\nabla\xi_i=-\alpha_j\otimes\xi_k+\alpha_k\otimes\xi_j.
\end{equation}
The $sp(1)$-connection 1-forms $\alpha_s$ on $H$ are expressed in
\cite{Biq1} by
\begin{gather}  \label{coneforms}
\alpha_i(X)=d\eta_k(\xi_j,X)=-d\eta_j(\xi_k,X), \quad X\in H, \quad
\xi_i\in V,
\end{gather}
while the $sp(1)$-connection 1-forms $\alpha_s$ on the vertical
space $V$ are calculated in \cite{IMV}
\begin{gather}  \label{coneform1}
\alpha_i(\xi_s)\ =\ d\eta_s(\xi_j,\xi_k) -\
\delta_{is}\left(\frac{S}2\ +\ \frac12\,\left(\,
d\eta_1(\xi_2,\xi_3)\ +\ d\eta_2(\xi_3,\xi_1)\ + \
d\eta_3(\xi_1,\xi_2)\right)\right),
\end{gather}
where  $S$ is the \emph{normalized} qc scalar curvature defined
below in \eqref{qscs}. The vanishing of the $sp(1)$-connection
$1$-forms on $H$ implies the vanishing of the torsion endomorphism of
the Biquard connection (see \cite{IMV}).

The fundamental 2-forms $\omega_s$
of the quaternionic structure $Q$ are defined by
\begin{equation}  \label{thirteen}
2\omega_{s|H}\ =\ \, d\eta_{s|H},\qquad \xi\lrcorner\omega_s=0,\quad \xi\in
V.
\end{equation}
Due to \eqref{thirteen}, the torsion restricted to $H$ has the form
\begin{equation}  \label{torha}
T(X,Y)=-[X,Y]_{|V}=2\omega_1(X,Y)\xi_1+2\omega_2(X,Y)\xi_2+2\omega_3(X,Y)\xi_3.
\end{equation}

\subsection{Invariant decompositions}
Any endomorphism $\Psi$ of $H$ can be  decomposed with respect to the quaternionic
structure $(\mathbb{Q},g)$ uniquely into four $Sp(n)$-invariant parts
$\Psi=\Psi^{+++}+\Psi^{+--}+\Psi^{-+-}+\Psi^{--+},$
where $\Psi^{+++}$ commutes with all three $I_i$, $\Psi^{+--}$ commutes with $I_1$ and
anti-commutes with the others two and etc. Explicitly,
\begin{equation}\label{spsp}
\begin{aligned}4\Psi^{+++}=\Psi-I_1\Psi I_1-I_2\Psi I_2-I_3\Psi
I_3,\qquad
4\Psi^{+--}=\Psi-I_1\Psi I_1+I_2\Psi I_2+I_3\Psi I_3,\\
4\Psi^{-+-}=\Psi+I_1\Psi I_1-I_2\Psi I_2+I_3\Psi I_3,\qquad 4\Psi^{--+}=\Psi+I_1\Psi
I_1+I_2\Psi I_2-I_3\Psi I_3.
\end{aligned}
\end{equation}
\noindent The two $Sp(n)Sp(1)$-invariant components \index{$Sp(n)Sp(1)$-invariant components!$\Psi_{[3]}$} \index{$Sp(n)Sp(1)$-invariant components!$\Psi_{[-1]}$} are given by
$$\Psi_{[3]}=\Psi^{+++},\quad \Psi_{[-1]}=\Psi^{+--}+\Psi^{-+-}+\Psi^{--+}
$$
with the following characterizing equations
\begin{equation}{\label{New21}}
\begin{aligned}
\Psi=\Psi_{[3]} \quad \Longleftrightarrow 3\Psi+I_1\Psi I_1+I_2\Psi I_2+I_3\Psi I_3=0,\\
\Psi=\Psi_{[-1]}\quad \Longleftrightarrow \Psi-I_1\Psi I_1-I_2\Psi I_2-I_3\Psi I_3=0.
\end{aligned}
\end{equation}
\noindent With a short calculation one sees
that the $Sp(n)Sp(1)$-invariant components are the projections on the eigenspaces of the
Casimir operator
\begin{equation}\label{e:cross}
\Upsilon =\ I_1\otimes I_1\ +\ I_2\otimes I_2\ +\ I_3\otimes I_3
\end{equation}
corresponding, respectively, to the eigenvalues $3$ and $-1$, see
\cite{CSal}. If $n=1$ then the space of symmetric endomorphisms
commuting with all $I_s$ is 1-dimensional, i.e. the
[3]-component of any symmetric endomorphism $\Psi$ on $H$ is
proportional to the identity,
$\Psi_{3}=\frac{|\Psi|^2}{4}Id_{|H}$. Note here that each of the
three 2-forms $\omega_s$ belongs to its [-1]-component,
$\omega_s=\omega_{s[-1]}$ and constitute a basis of the lie
algebra $sp(1)$.

\subsection{The torsion tensor}
The properties of the Biquard connection are encoded in the
properties of
the torsion
endomorphism $T_{\xi}=T(\xi,\cdot) : H\rightarrow H, \quad \xi\in V$. Decomposing
the endomorphism $T_{\xi}\in(sp(n)+sp(1))^{\perp}$
into its symmetric part $T^0_{\xi}$ and skew-symmetric part
$b_{\xi}, T_{\xi}=T^0_{\xi} + b_{\xi} $, O. Biquard shows in
\cite{Biq1} that the torsion $T_{\xi}$ is completely trace-free,
$tr\, T_{\xi}=tr\, T_{\xi}\circ
I_s=0$, its symmetric part has the properties
$T^0_{\xi_i}I_i=-I_iT^0_{\xi_i}\quad
I_2(T^0_{\xi_2})^{+--}=I_1(T^0_{\xi_1})^{-+-},\quad
I_3(T^0_{\xi_3})^{-+-}=I_2(T^0_{\xi_2})^{--+},\quad
I_1(T^0_{\xi_1})^{--+}=I_3(T^0_{\xi_3})^{+--} $,
where the upperscript $+++$ means commuting  with all three $I_i$, $+--$
indicates commuting with $I_1$ and anti-commuting with the other two and etc.
The skew-symmetric part can be represented as $b_{\xi_i}=I_iu$, where
$u$ is a traceless symmetric (1,1)-tensor on $H$ which commutes with
$I_1,I_2,I_3$. If $n=1$ then the tensor $u$ vanishes identically,
$u=0$ and the torsion is a symmetric tensor, $T_{\xi}=T^0_{\xi}$.

Any 3-Sasakian manifold has zero torsion endomorphism, and
 the converse is true if in addition the qc scalar curvature (see
\eqref{qscs}) is a positive constant \cite{IMV}. We remind that a
$(4n+3)$-dimensional  Riemannian manifold $(M,g)$ is
called 3-Sasakian if the cone
metric $g_c=t^2g+dt^2$ on $C=M\times \mathbb{R}^+$ is a
hyper K\"ahler metric, namely, it has holonomy contained
in $Sp(n+1)$ \cite{BGN}. A 3-Sasakian manifold of dimension
$(4n+3)$ is Einstein with positive Riemannian scalar curvature
$(4n+2)(4n+3)$ \cite{Kas} and if complete it is compact with a
finite fundamental group, (see \cite{BG} for a nice overview of
3-Sasakian spaces).

\subsection{Torsion and curvature}

Let $R=[\nabla,\nabla]-\nabla_{[\ ,\ ]}$ be the curvature tensor of
$\nabla$ and the dimension is $4n+3$. We denote the curvature tensor
of type (0,4) and the torsion tensor of type (0,3) by the same letter, $R(A,B,C,D):=g(R(A,B)C,D),\quad T(A,B,C):=g(T(A,B),C)$,
$A,B,C,D \in \Gamma(TM)$. The  Ricci tensor, the normalized
scalar curvature and the  Ricci $2$-forms  of the Biquard connection, called \emph{qc-Ricci tensor} $Ric$,
\emph{normalized qc-scalar curvature} $S$ and \emph{qc-Ricci forms} $\rho_s, \tau_s$, respectively, are
defined by
\begin{equation}  \label{qscs}
\begin{aligned}
Ric(A,B)=R(e_b,A,B,e_b),\quad 8n(n+2)S=R(e_b,e_a,e_a,e_b),\\
\rho_s(A,B)=\frac1{4n}R(A,B,e_a,I_se_a), \quad \tau_s(A,B)=\frac1{4n}R(e_a,I_se_a,A,B,).
\end{aligned}
\end{equation}
The $sp(1)$-part of $R$ is determined by the Ricci 2-forms and the
connection 1-forms by
\begin{equation}  \label{sp1curv}
R(A,B,\xi_i,\xi_j)=2\rho_k(A,B)=(d\alpha_k+\alpha_i\wedge\alpha_j)(A,B),
\qquad A,B \in \Gamma(TM).
\end{equation}
The two $Sp(n)Sp(1)$-invariant trace-free symmetric 2-tensors
$T^0(X,Y)=
g((T_{\xi_1}^{0}I_1+T_{\xi_2}^{0}I_2+T_{ \xi_3}^{0}I_3)X,Y)$, $U(X,Y)
=g(uX,Y)$ on $H$, introduced in \cite{IMV}, have the properties:
\begin{equation}  \label{propt}
\begin{aligned} T^0(X,Y)+T^0(I_1X,I_1Y)+T^0(I_2X,I_2Y)+T^0(I_3X,I_3Y)=0, \\
U(X,Y)=U(I_1X,I_1Y)=U(I_2X,I_2Y)=U(I_3X,I_3Y). \end{aligned}
\end{equation}
In dimension seven $(n=1)$, the tensor $U$ vanishes identically,
$U=0$.

We shall need the following identity taken from
\cite[Proposition~2.3]{IV}
\begin{equation}  \label{need}
4T^0(\xi_s,I_sX,Y)=T^0(X,Y)-T^0(I_sX,I_sY).
\end{equation}
Thus, taking into account \eqref{need} we have the formula
\begin{equation}  \label{need1}
T(\xi_s,I_sX,Y)=T^0(\xi_s,I_sX,Y)+g(I_suI_sX,Y)=\frac14\Big[T^0(X,Y)-T^0(I_sX,I_sY)\Big]-U(X,Y).
\end{equation}
\begin{dfn}  A qc structure is said to be qc Einstein if the horizontal qc-Ricci tensor is a scalar multiple of the metric,
$$Ric(X,Y)=2(n+2)Sg(X,Y).$$
\end{dfn}
The horizontal Ricci tensor and the horizontal Ricci 2-forms can be expressed in terms of the
torsion of the Biquard connection \cite{IMV} (see also
\cite{IMV1,IV}). We collect the necessary facts from
\cite[Theorem~1.3, Theorem~3.12, Corollary~3.14, Proposition~4.3 and
Proposition~4.4]{IMV} with slight modification presented in
\cite{IV}

\begin{thrm}[\cite{IMV}]\label{sixtyseven} On a $(4n+3)$-dimensional qc manifold $(M,\eta,\mathbb{Q})$ with a normalized scalar curvature $S$ we have the following relations
\begin{equation} \label{sixtyfour}
\begin{aligned}
& Ric(X,Y)  =(2n+2)T^0(X,Y)+(4n+10)U(X,Y)+2(n+2)Sg(X,Y),\\
& \rho_s(X,I_sY) = -\frac12\Bigl[T^0(X,Y)+T^0(I_sX,I_sY)\Bigr]-2U(X,Y)-Sg(X,Y),\\
& \tau_s(X,I_sY)  = -\frac{n+2}{2n}\Bigl[T^0(X,Y)+T^0(I_sX,I_sY)\Bigr]-Sg(X,Y),\\
& T(\xi_{i},\xi_{j}) =-S\xi_{k}-[\xi_{i},\xi_{j}]_{H}, \qquad S  = -g(T(\xi_1,\xi_2),\xi_3),\\
& g(T(\xi_i,\xi_j),X) =-\rho_k(I_iX,\xi_i)=-\rho_k(I_jX,\xi_j)=-g([\xi_i,\xi_j],X),\\
& \frac12\xi_j(S) = \rho_i(\xi_i,\xi_j)+\rho_k(\xi_k,\xi_j),\\
& \rho_{i}(\xi_{i},X)  = \frac {X(S)}{4}  + \frac 12\, \left (\rho_{i}(\xi_{j},I_{k}X)-\rho_{j}(\xi_{k},I_{i}X)-\rho_{k}(\xi_{i},I_{j}X)
\right).
 \end{aligned}
\end{equation}
For $n=1$ the above formulas hold with $U=0$.

The qc Einstein condition
is equivalent to the vanishing of the torsion endomorphism of the
Biquard connection. In this case $S$ is constant and  the vertical distribution
is integrale provided $n>1$.
\end{thrm}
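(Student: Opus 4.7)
The plan is to derive every identity by systematically exploiting the first Bianchi identity for the Biquard connection $\nabla$, combined with the explicit description of the torsion on $H\times H$ from \eqref{torha} and the algebraic constraints on $T_\xi\in (sp(n)\oplus sp(1))^\perp$ recalled just before the theorem. Since $\nabla$ has torsion, the first Bianchi identity in mixed form reads
\begin{equation*}
\sum_{\text{cyc}(A,B,C)}R(A,B,C,D)\ =\ \sum_{\text{cyc}(A,B,C)}\Bigl[(\nabla_AT)(B,C,D)+T(T(A,B),C,D)\Bigr],
\end{equation*}
which is the workhorse for all the listed relations.

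First, to obtain the Ricci formula I would evaluate the Bianchi identity on horizontal triples and contract one horizontal argument using the basis $\{e_a\}$. Because $T(X,Y)$ is vertical by \eqref{torha}, the horizontal piece of the cyclic sum reduces to terms of the form $T(\xi_s,\cdot,\cdot)$ weighted by $\omega_s(\cdot,\cdot)$. Decomposing $T_{\xi_s}=T^0_{\xi_s}+I_su$ and using the identities $T^0_{\xi_i}I_i=-I_iT^0_{\xi_i}$ together with the tensor $T^0$ defined by $T^0(X,Y)=g\bigl((T^0_{\xi_1}I_1+T^0_{\xi_2}I_2+T^0_{\xi_3}I_3)X,Y\bigr)$ and $U(X,Y)=g(uX,Y)$ from \eqref{propt}, the contraction collapses into the three distinct $Sp(n)Sp(1)$-invariant pieces: the $[3]$-component proportional to $g$ (giving the $2(n+2)S$ term), the trace-free $[-1]$-component coming from $T^0$, and the trace-free commuting-with-$I_s$ component coming from $U$. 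The coefficients $(2n+2)$ and $(4n+10)$ come out by counting how many times each $I_s$ appears after the contraction, using $\dim H=4n$.

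For the Ricci 2-forms $\rho_s$ and $\tau_s$, the strategy is the same Bianchi contraction, but now tested against $I_s$. The key input is that the $sp(1)$-part of the curvature on $H\times H\times V\times V$ is governed by \eqref{sp1curv}, so that $R(X,Y,\xi_i,\xi_j)=2\rho_k(X,Y)$, and the contraction $\tfrac1{4n}R(e_a,I_se_a,X,Y)$ can be rewritten via Bianchi in terms of $\nabla T$ and $T\circ T$. After symmetrizing in $X,Y$ the antisymmetric contributions drop out, and the identities $T^0(X,Y)=-T^0(I_1X,I_1Y)-T^0(I_2X,I_2Y)-T^0(I_3X,I_3Y)$ and $U=U(I_s\cdot,I_s\cdot)$ from \eqref{propt} are used to isolate the symmetric combinations $T^0(X,Y)+T^0(I_sX,I_sY)$ that appear in the stated formulas.

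The vertical relations form a separate block. For $T(\xi_i,\xi_j)=-S\xi_k-[\xi_i,\xi_j]_H$ I would use the defining equations \eqref{bi1} of the Reeb fields together with the explicit description of $\alpha_s$ on $V$ given in \eqref{coneform1}: one reads off $S$ as precisely $-g(T(\xi_1,\xi_2),\xi_3)$ by evaluating \eqref{coneform1} on the three Reeb fields and using $\nabla\xi_i=-\alpha_j\otimes\xi_k+\alpha_k\otimes\xi_j$ from \eqref{xider}. The equality $g(T(\xi_i,\xi_j),X)=-\rho_k(I_iX,\xi_i)$ follows by applying \eqref{sp1curv} and the first Bianchi identity to the triple $(\xi_i,\xi_j,I_iX)$ and projecting the $sp(1)$-part via \eqref{coneforms}. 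The formula $\tfrac12\xi_j(S)=\rho_i(\xi_i,\xi_j)+\rho_k(\xi_k,\xi_j)$ is obtained by differentiating $S=-g(T(\xi_1,\xi_2),\xi_3)$ along $\xi_j$ and invoking the second Bianchi identity $d^\nabla R=R\wedge T$; the horizontal analogue $\rho_i(\xi_i,X)=\tfrac14 X(S)+\cdots$ comes out of the same second Bianchi relation restricted to triples $(X,\xi_j,\xi_k)$, with the three cyclic terms producing the three Reeb-derivative corrections on the right-hand side. Finally, the qc-Einstein claim follows algebraically: vanishing of the $[-1]$- and $U$-components in the Ricci formula forces $T^0=U=0$, hence $T_\xi\equiv 0$; then the last two displayed formulas force $X(S)=\xi_j(S)=0$, so $S$ is locally constant, and integrability of $V$ for $n>1$ follows from $T(\xi_i,\xi_j)=-S\xi_k\in V$.

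The main obstacle is purely computational rather than conceptual: one must carefully track the $Sp(n)Sp(1)$-invariant decomposition \eqref{spsp}--\eqref{New21} of every contracted tensor that appears, using repeatedly the anti-commutation $T^0_{\xi_i}I_i=-I_iT^0_{\xi_i}$ and the $I_s$-invariance of $U$ to make sure the numerical coefficients $2n+2$, $4n+10$, $2(n+2)$, $\tfrac{n+2}{2n}$ come out correctly. Everything else reduces to well-organized applications of the first Bianchi identity and the structure equations \eqref{coneforms}, \eqref{coneform1}.
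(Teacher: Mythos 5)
You should first be aware that the paper contains no proof of this statement: Theorem~\ref{sixtyseven} is imported verbatim from \cite{IMV} (Theorem~1.3, Theorem~3.12, Corollary~3.14, Propositions~4.3 and~4.4 there, with the modifications of \cite{IV}), so the benchmark is the derivation in \cite{IMV}, whose broad architecture -- first Bianchi identity with torsion, the decomposition $T_{\xi}=T^0_{\xi}+I_su$, the invariant decomposition \eqref{spsp}--\eqref{New21}, and the structure equations \eqref{coneforms}, \eqref{coneform1}, \eqref{sp1curv} -- your sketch does reproduce. The vertical block of your plan is essentially sound: $T(\xi_i,\xi_j)=-S\xi_k-[\xi_i,\xi_j]_H$ and $S=-g(T(\xi_1,\xi_2),\xi_3)$ do follow from \eqref{xider} and \eqref{coneform1}, the formulas for $\xi_j(S)$ and $\rho_i(\xi_i,X)$ do come from Bianchi identities with vertical entries (these are exactly Propositions~4.3 and~4.4 of \cite{IMV}), and your closing argument for the qc-Einstein equivalence (the trace-free part of \eqref{sixtyfour} splits into a $[-1]$-piece carrying $T^0$ and a $[3]$-piece carrying $U$, forcing $T^0=U=0$, hence $T_\xi\equiv 0$, then $dS=0$ and integrability of $V$ from $T(\xi_i,\xi_j)=-S\xi_k\in V$) is the correct logic.

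The genuine gap is in your first step, the derivation of the Ricci formula. Evaluating the first Bianchi identity on a purely horizontal quadruple and contracting cannot produce the symmetric identity $Ric=(2n+2)T^0+(4n+10)U+2(n+2)Sg$. Indeed, since $T(X,Y)\in V$ by \eqref{torha}, the $(0,3)$-torsion vanishes on $H\times H\times H$ and (as $\nabla$ preserves $H\oplus V$) so does its horizontal covariant derivative, cf.\ \eqref{par}; contracting $\sum_{\mathrm{cyc}}R(X,Y,Z,W)$ over a horizontal pair then yields $Ric(X,Y)-Ric(Y,X)$ on the left -- only the \emph{skew} part of the Ricci tensor -- because the curvature of the Biquard connection lacks the pair symmetry $R(A,B,C,D)=R(C,D,A,B)$. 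So the coefficients $(2n+2)$ and $(4n+10)$ cannot ``come out by counting'' at that stage: they are precisely the content of the theorem, and in \cite{IMV} they emerge only after (i) computing $\rho_s(X,I_sY)$ from the explicit differentiation of the connection $1$-forms \eqref{coneforms} via \eqref{sp1curv} (a structure-equation computation, not a Bianchi contraction), (ii) applying the first Bianchi identity to \emph{mixed} triples with one vertical argument, where $(\nabla_XT)(\xi_s,\cdot,\cdot)$ does not vanish and derivative-of-torsion terms must be shown to cancel, and (iii) relating $Ric$, $\rho_s$, $\tau_s$ and the further contraction $\zeta_s$ through the $sp(n)\oplus sp(1)$ decomposition of $R$ in its last two slots. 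Your sketch gestures at some of these inputs for $\rho_s$ and $\tau_s$ but assigns the Ricci formula to a mechanism that provably returns zero information about its symmetric part; without the mixed-argument analysis and the curvature-symmetry bookkeeping, the proof does not close.
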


\subsection{The Ricci identities}
We shall use repeatedly the following Ricci identities of order two and three, see  also \cite{IV}.
Let $\xi_i$, $i=1,2,3$ be  the Reeb vector fields, $X, Y\in H$ and $f$ a smooth function on the qc manifold $M$ with $\bi f$ its horizontal gradient of $f$, $g(\gr,X)=df(X)$.  We have:
\begin{equation}\label{boh2}
\begin{aligned}
& \nabla^2f (X,Y)-\nabla^2f(Y,X)=-2\sum_{s=1}^3\omega_s(X,Y)df(\xi_s)\\
& \nabla^2f (X,\xi_s)-\nabla^2f(\xi_s,X)=T(\xi_s,X,\gr)\\
& \nabla^3 f (X,Y,Z)-\nabla^3 f(Y,X,Z)=-R(X,Y,Z,\gr) - 2\sum_{s=1}^3 \omega_s(X,Y)\nabla^2f (\xi_s,Z)\\
\end{aligned}
\end{equation}
where we used \eqref{torha} in the last equalities in the first and the fourth lines.


\subsection{The horizontal divergence theorem}
Let $(M, g,\mathbb{Q})$ be a qc manifold of dimension $4n+3>7$
For a fixed local 1-form $\eta$ and a fix $s\in \{1,2,3\}$ the form
\begin{equation}\label{e:volumeform}
Vol_{\eta}=\eta_1\wedge\eta_2\wedge\eta_3\wedge\omega_s^{2n}
\end{equation}
is a locally defined volume form. Note that $Vol_{\eta}$ is independent
on $s$ as well as it is independent on the local one forms $\eta_1,\eta_2,\eta_3$. Hence it is globally defined volume form denoted with $\vol$.

We consider the (horizontal) divergence of a horizontal vector
field/one-form $\sigma\in\Lambda^1\, (H)$ defined by
\begin{equation}\label{e:divergence}
\nabla^*\, \sigma\ =-tr|_{H}\nabla\sigma=\
-\nabla \sigma(e_a,e_a).
\end{equation}
We need  the following Proposition from \cite{IMV}, see also \cite{Wei}, which allows "integration by parts".
\begin{prop}[\cite{IMV}]\label{div}
On a compact  quaternionic contact manifold $(M,\eta)$  the following divergence formula holds true
\[
\int_M (\nabla^*\sigma)\,\vol\ =\ 0.
\]
\end{prop}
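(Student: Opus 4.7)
The plan is to reduce the identity to the classical Stokes theorem for the extended Riemannian metric $g$ on $TM$, and then to match the Biquard horizontal divergence with (minus) the Levi-Civita divergence of the vector field $\sigma^\sharp\in\Gamma(H)$ dual to $\sigma$. First I would observe that the globally defined form $Vol_\eta=\eta_1\wedge\eta_2\wedge\eta_3\wedge\omega_s^{2n}$ is the constant $(2n)!$ times the Riemannian volume form $dV_g$ of the extended metric, since $\omega_s^{2n}/(2n)!$ is the symplectic volume on $H$ while $\eta_1\wedge\eta_2\wedge\eta_3$ is dual to the orthonormal Reeb triple. Applying Cartan's formula together with $d\,Vol_\eta=0$ to the $(4n+2)$-form $\sigma^\sharp\lrcorner Vol_\eta$ and then Stokes' theorem on the compact boundaryless manifold $M$ yields
\[
0=\int_M d(\sigma^\sharp\lrcorner Vol_\eta)=\int_M \mathcal{L}_{\sigma^\sharp} Vol_\eta=\int_M \mathrm{div}^g(\sigma^\sharp)\,Vol_\eta,
\]
so the proposition reduces to the pointwise identity $\mathrm{div}^g(\sigma^\sharp)=-\nabla^*\sigma$.

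To prove this identity I would split the Riemannian trace along the orthogonal decomposition $TM=H\oplus V$ as
\[
\mathrm{div}^g(\sigma^\sharp)=\sum_{a=1}^{4n}g(\nabla^g_{e_a}\sigma^\sharp,e_a)+\sum_{s=1}^{3}g(\nabla^g_{\xi_s}\sigma^\sharp,\xi_s),
\]
and treat the two sums separately. For the horizontal sum I would substitute \eqref{lcbi} into the difference $g(\nabla_{e_a}\sigma^\sharp,e_a)-g(\nabla^g_{e_a}\sigma^\sharp,e_a)$; every surviving torsion term has $T(X,Y)\in V$ for horizontal $X,Y$ (Theorem~\ref{biqcon} i)) paired against a horizontal $e_a$, so all three correction terms vanish pointwise. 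Hence $\sum_a g(\nabla^g_{e_a}\sigma^\sharp,e_a)=\sum_a g(\nabla_{e_a}\sigma^\sharp,e_a)=-\nabla^*\sigma$ by the definition \eqref{e:divergence}.

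For the vertical sum I would use $g(\sigma^\sharp,\xi_s)=0$ to rewrite the $s$-th term as $-g(\sigma^\sharp,\nabla^g_{\xi_s}\xi_s)$ and then show that each $\nabla^g_{\xi_s}\xi_s$ has no horizontal component. The Koszul formula evaluated on $(\xi_s,\xi_s,W)$ with $W\in H$, together with $g(\xi_s,W)=0$ and $g(\xi_s,\xi_s)=1$, collapses to
\[
g(\nabla^g_{\xi_s}\xi_s,W)=-\eta_s([\xi_s,W])=d\eta_s(\xi_s,W),
\]
which vanishes by the Reeb normalization $(\xi_s\lrcorner d\eta_s)|_H=0$ from \eqref{bi1}. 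The main obstacle is precisely this last step: it is the first equation in \eqref{bi1} that forces the vertical contribution to vanish term-by-term, and without this specific characterization of the Reeb triple the divergence formula would in general fail. The horizontal cancellation, by contrast, is an automatic feature of any metric connection whose torsion on $H$ takes values in the complementary distribution $V$.
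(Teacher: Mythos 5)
Your proposal is correct, and every step checks out: $Vol_\eta=\eta_1\wedge\eta_2\wedge\eta_3\wedge\omega_s^{2n}$ is indeed $\pm(2n)!$ times the Riemannian volume of the extended metric (evaluate on an adapted orthonormal frame, using $\eta_t(X)=0$ and $\xi\lrcorner\omega_s=0$ from \eqref{thirteen}); the horizontal correction terms in \eqref{lcbi} vanish because $T(X,Y)=-[X,Y]_{|V}\in V$ for $X,Y\in H$ is paired against horizontal vectors; and the Koszul computation $g(\nabla^g_{\xi_s}\xi_s,W)=-\eta_s([\xi_s,W])=d\eta_s(\xi_s,W)=0$ for $W\in H$ is exactly right, with the sign conventions matching the paper's (the convention $d\eta(A,B)=A\eta(B)-B\eta(A)-\eta([A,B])$ is the one consistent with $2\omega_{s|H}=d\eta_{s|H}$). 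You have also correctly identified the two structural inputs: the verticality of the torsion on $H$ from Theorem~\ref{biqcon}~i), and the Reeb normalization $(\xi_s\lrcorner d\eta_s)_{|H}=0$ from \eqref{bi1}.

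One point of comparison: the paper itself offers no proof of Proposition~\ref{div}, quoting it from \cite{IMV} (see also \cite{Wei}); the proof there proceeds by showing directly, via a computation with the Biquard connection (in the spirit of the qc-normal frame computations used elsewhere in this paper), that $(\nabla^*\sigma)\,Vol_\eta$ is an exact top-degree form, essentially $\pm\,d(\sigma^\sharp\lrcorner Vol_\eta)$, so Stokes applies without ever invoking the Levi-Civita connection. Your route instead reduces everything to the classical Riemannian divergence theorem for the extended metric and then reconciles the two divergences through \eqref{lcbi}; this is slightly longer but has the merit of isolating precisely where the qc structure enters (it is also consonant with the paper's own proof of Lemma~\ref{lapl} and the divergence computation \eqref{diver}, which use the same torsion identities \eqref{idt}). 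Note that your vertical computation is consistent with the fact that $\nabla_{\xi_s}\xi_s\in V$ for the Biquard connection by \eqref{xider}: since $T(\xi_s,W)\in H$ for $W\in H$, formula \eqref{lcbi} gives $g(\nabla^g_{\xi_s}\xi_s,W)=g(\nabla_{\xi_s}\xi_s,W)=0$, which could have replaced your Koszul step. Either way, the argument is sound.
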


\section{The Bochner formula for the sub-Laplacian}

The horizontal sub-Laplacian $\triangle f$ and the norm of the horizontal gradient $\nabla f$ of a smooth function $f$ on $M$ are defined respectively by
\begin{equation}\label{lap}\triangle f\ =-\ tr^g_H(\nabla^2f)\ =\bi^*df= -\
\nabla^2f(e_a,e_a), \qquad |\nabla f|^2\ =\
df(e_a)\,df(e_a).
\end{equation}
The function $f$ is an eigenfunction with eigenvalue $\lambda$ of the sub-Laplacian if
\begin{equation}\label{eig}
\triangle f =\lambda f,
\end{equation}
for some constant $\lambda$. The divergence formula implies that on a compact qc manifolds all eigenvalues of the sub-Laplacian are non-negative. Our main result Theorem \ref{main1} gives a lower bound on the positive eigenvalues. Therefore, Theorem \ref{main1} can also be interpreted as giving a bound from below on the  first eigenvalue $\lambda_1$, i.e., of the smallest positive eigenvalue for which \eqref{eig} holds.

We start with the proof of the following Bochner-type formula.
\begin{thrm}\label{t:bohh}
On a qc manifold of dimension $4n+3$ the  next formula holds true
\begin{multline}\label{bohh}
\frac12\triangle |\nabla f|^2=|\bi^2f|^2-g\left (\nabla (\triangle f), \gr \right )+Ric(\nabla f,\nabla f)+2\sum_{s=1}^3T(\xi_s,I_s\gr,\gr)+
4\sum_{s=1}^3\bi^2f(\xi_s,I_s\gr).
\end{multline}
\end{thrm}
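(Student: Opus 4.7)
The plan is to compute $\frac12\triangle|\gr|^2$ by direct expansion and then reorganize the resulting third-order derivative contraction through two successive applications of the Ricci identities \eqref{boh2}. Each commutator contributes one of the curvature or torsion or vertical-Hessian pieces on the right-hand side of \eqref{bohh}, and the combined combinatorics deliver the precise coefficients $4$ and $2$ displayed in the statement.

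First I differentiate. Since $\nabla g=0$ one has $d|\gr|^2(X)=2\,\bi^2 f(X,\gr)$, and a second covariant derivative traced against the orthonormal horizontal frame $\{e_a\}$, together with the definition $\triangle=-\mathrm{tr}_H\bi^2$, produces a combination of $|\bi^2 f|^2$ and the contracted third derivative $\bi^3 f(e_a,e_a,\gr)$. The Hessian-squared piece is in final form; the task is to rewrite $\bi^3 f(e_a,e_a,\gr)$ in terms of $\bi^3 f(\gr,e_a,e_a)=-g(\nabla(\triangle f),\gr)$ plus the curvature and torsion corrections appearing on the right-hand side of \eqref{bohh}.

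This reorganization proceeds by two commutator swaps. For the $(2,3)$ swap, I covariantly differentiate the asymmetry $\bi^2 f(Y,Z)-\bi^2 f(Z,Y)=-2\sum_s\omega_s(Y,Z)\,df(\xi_s)$ from \eqref{boh2}; this a priori produces three types of correction terms, $(\nabla_X\omega_s)(Y,Z)\,df(\xi_s)$, $\omega_s(Y,Z)\,\bi^2 f(X,\xi_s)$, and $\omega_s(Y,Z)\,df(\nabla_X\xi_s)$, the first and third of which cancel upon summation over $s=1,2,3$ in view of the structure equations $\nabla I_s=-\alpha_j\otimes I_k+\alpha_k\otimes I_j$ and $\nabla\xi_s=-\alpha_j\otimes\xi_k+\alpha_k\otimes\xi_j$ from \eqref{xider}. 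The resulting clean identity is
\[
\bi^3 f(X,Y,Z)-\bi^3 f(X,Z,Y)=-2\sum_s\omega_s(Y,Z)\,\bi^2 f(X,\xi_s),
\]
so tracing at $(X,Y,Z)=(e_a,e_a,\gr)$ and using $\omega_s(e_a,\gr)=-g(e_a,I_s\gr)$ produces a contribution $2\sum_s\bi^2 f(I_s\gr,\xi_s)$. For the $(1,2)$ swap I apply the third line of \eqref{boh2} at $(X,Y,Z)=(e_a,\gr,e_a)$ and use the antisymmetries of $R$ to obtain the curvature trace $-\sum_a R(e_a,\gr,e_a,\gr)=Ric(\gr,\gr)$ together with a further vertical contribution $2\sum_s\bi^2 f(\xi_s,I_s\gr)$.

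Collecting the two swaps leaves the sum $2\sum_s[\bi^2 f(\xi_s,I_s\gr)+\bi^2 f(I_s\gr,\xi_s)]$; invoking the second line of \eqref{boh2} with $X=I_s\gr$, namely $\bi^2 f(I_s\gr,\xi_s)=\bi^2 f(\xi_s,I_s\gr)+T(\xi_s,I_s\gr,\gr)$, converts this into $4\sum_s\bi^2 f(\xi_s,I_s\gr)+2\sum_s T(\xi_s,I_s\gr,\gr)$, matching the coefficients in \eqref{bohh}. The main technical obstacle is the cancellation in the $(2,3)$ swap: verifying that $\sum_s\bigl[(\nabla_X\omega_s)(Y,Z)\,df(\xi_s)+\omega_s(Y,Z)\,df(\nabla_X\xi_s)\bigr]=0$ requires expanding the $\alpha_j,\alpha_k$ contributions cyclically and checking that each of the three coefficients of $df(\xi_1),df(\xi_2),df(\xi_3)$ collapses pairwise; this cancellation is the structural fact that delivers the clean intermediate identity and hence the stated Bochner formula.
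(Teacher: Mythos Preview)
Your proof is correct and follows essentially the same route as the paper: expand $-\tfrac12\triangle|\gr|^2$, perform the $(2,3)$ and $(1,2)$ commutator swaps via the Ricci identities \eqref{boh2}, and then use the mixed identity $\nabla^2 f(X,\xi_s)-\nabla^2 f(\xi_s,X)=T(\xi_s,X,\gr)$ to produce the coefficients $4$ and $2$. The only cosmetic difference is that the paper packages your explicit $\alpha_j,\alpha_k$ cancellation as the single statement $(\nabla_X T)(Y,Z)=0$ (equation \eqref{par}, obtained by applying \eqref{xider} to \eqref{torha}), which is precisely the identity you verify by hand.
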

\begin{proof}
By definition we have
\begin{equation}\label{boh1}
-\frac12\triangle |\nabla f|^2=\bi^3f(e_a,e_a,e_b) df(e_b)+\bi^2f(e_a,e_b)\bi^2f(e_a,e_b)
=\bi^3f(e_a,e_a,e_b) df(e_b) + |\bi^2f|^2.
\end{equation}
To evaluate the first term in the right hand side of \eqref{boh1} we use the Ricci identities \eqref{boh2}. An application of \eqref{xider} to \eqref{torha} gives
\begin{equation}\label{par}
(\bi_X T)(Y,Z)=0.
\end{equation}
Applying successively the Ricci identities \eqref{boh2} and also \eqref{par} we obtain the next sequence of equalities
\begin{multline}\label{boh3}
\bi^3f(e_a,e_a,e_b)df(e_b)=\bi^3f(e_a,e_b,e_a) df(e_b)-2\sum_{s=1}^3\omega_s(e_a,e_b)df(e_b)\bi^2f(e_a,\xi_s)\\=\bi^3f(e_b,e_a,e_a)df(e_b)
-R(e_a,e_b,e_a,e_c)df(e_c)df(e_b)-
2\sum_{s=1}^3\omega_s(e_a,e_b)df(e_b)\Big[\bi^2f(\xi_s,e_a)+\bi^2f(e_a,\xi_s)\Big]\\= -d(\triangle f)(e_b) df(e_b)+Ric(\nabla f,\nabla f)+
4\sum_{s=1}^3\bi^2f(\xi_s,I_s\gr)+2\sum_{s=1}^3T(\xi_s,I_s\gr,\gr)
\end{multline}
A substitution of \eqref{boh3}  in \eqref{boh1} completes the proof of \eqref{bohh}.
\end{proof}

\begin{cor}
On a qc manifold of dimension $4n+3$ the  next formula holds
\begin{multline}\label{boh}
\frac12\triangle |\nabla f|^2=-d(\triangle f)(e_a) df(e_a)+Ric(\nabla f,\nabla f)+
2T^0(\nabla f,\gr)-6U(\gr,\gr) +|\bi^2f|^2\\+
4\sum_{s=1}^3\bi^2f(\xi_s,I_s\gr).
\end{multline}
\end{cor}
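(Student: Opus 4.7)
The plan is to derive the Corollary directly from Theorem~\ref{t:bohh} by rewriting the torsion term $2\sum_{s=1}^3 T(\xi_s,I_s\gr,\gr)$ in terms of the invariant tensors $T^0$ and $U$. All other terms in formula \eqref{bohh} already appear in the desired formula \eqref{boh}, so the content of the Corollary is precisely this identification.

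First, I would apply formula \eqref{need1} pointwise with $X=Y=\gr$. This yields
\begin{equation*}
T(\xi_s,I_s\gr,\gr)=\frac14\Big[T^0(\gr,\gr)-T^0(I_s\gr,I_s\gr)\Big]-U(\gr,\gr).
\end{equation*}
Summing over $s\in\{1,2,3\}$ gives
\begin{equation*}
\sum_{s=1}^3 T(\xi_s,I_s\gr,\gr)=\frac14\Big[3T^0(\gr,\gr)-\sum_{s=1}^3 T^0(I_s\gr,I_s\gr)\Big]-3U(\gr,\gr).
\end{equation*}

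Next, I would invoke the first identity in \eqref{propt}, which with $X=Y=\gr$ reads
\begin{equation*}
T^0(\gr,\gr)+\sum_{s=1}^3 T^0(I_s\gr,I_s\gr)=0,
\end{equation*}
so that $\sum_{s=1}^3 T^0(I_s\gr,I_s\gr)=-T^0(\gr,\gr)$. Substituting this back, the bracket equals $3T^0(\gr,\gr)+T^0(\gr,\gr)=4T^0(\gr,\gr)$, and therefore
\begin{equation*}
2\sum_{s=1}^3 T(\xi_s,I_s\gr,\gr)=2T^0(\gr,\gr)-6U(\gr,\gr).
\end{equation*}

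Finally, plugging this identity into \eqref{bohh} (and rewriting $g(\nabla(\triangle f),\gr)=d(\triangle f)(e_a)df(e_a)$) produces exactly \eqref{boh}, completing the proof. There is no serious obstacle here: the Corollary is a purely algebraic consequence of the invariant decomposition of the torsion endomorphism, and the only thing to be careful about is the sign in \eqref{propt} and the factor $1/4$ in \eqref{need1}.
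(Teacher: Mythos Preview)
Your proof is correct and follows essentially the same approach as the paper: you use \eqref{need1} with $X=Y=\gr$ together with the first identity in \eqref{propt} to obtain $2\sum_{s=1}^3 T(\xi_s,I_s\gr,\gr)=2T^0(\gr,\gr)-6U(\gr,\gr)$, and then substitute into \eqref{bohh}. The paper's own proof merely states this computation as \eqref{boh4} without writing out the intermediate steps you have supplied.
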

\begin{proof}
Using \eqref{need1} together with \eqref{propt}, we calculate
\begin{equation}\label{boh4}
2\sum_{s=1}^3T(\xi_s,I_s\gr,\gr)=2T^0(\gr,\gr)-6U(\gr,\gr),
\end{equation}
which when combined with \eqref{boh4} and \eqref{bohh} give \eqref{boh}.
\end{proof}
 Our next goal is  to evaluate in two ways the last term of \eqref{boh}. First using the $Sp(n)Sp(1)$-invariant orthogonal decomposition  $\Psi_{[3]}\oplus\Psi_{[-1]}$ of all linear maps on $H$, we obtain  the $Sp(n)Sp(1)$-invariant  decomposition of the horizontal Hessian $\bi^2f$ (after the usual identification of tensors through the metric), namely
\begin{equation}\label{comp}
\begin{aligned}
(\bi^2f)_{[3]}(X,Y)=\frac14\Big[\bi^2f(X,Y)+\sum_{s=1}^3\bi^2f(I_sX,I_sY)\Big]\\
(\bi^2f)_{[-1]}(X,Y)=\frac14\Big[3\bi^2f(X,Y)-\sum_{s=1}^3\bi^2f(I_sX,I_sY)\Big].
\end{aligned}
\end{equation}
We continue with the next lemma where we give the first formula for the last term of \eqref{boh}.
\begin{lemma}\label{gr1}
On a compact qc manifold of dimension $4n+3$ the next integral formula holds
\begin{equation}\label{1}
\int_M\sum_{s=1}^3\bi^2f(\xi_s,I_s\gr)\vol=\int_M\Big[\frac3{4n}|(\bi^2f)_{[3]}|^2-\frac1{4n}|(\bi^2f)_{[-1]}|^2-\frac12\sum_{s=1}^3\tau_{s}(I_s\gr,\gr)\Big]\vol.
\end{equation}
\end{lemma}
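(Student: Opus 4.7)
The plan is to obtain two independent expressions for the integral $I := \int_M \sum_{s,c} \bi^3 f(e_c, I_s e_c, I_s \gr) \, \vol$ and then equate them. The first, arising from the third Ricci identity in \eqref{boh2} combined with the skew-symmetry of each $I_s$, will express $I$ as a linear combination of $\int_M \sum_s \tau_s(I_s \gr, \gr) \, \vol$ and the target quantity $\int_M \sum_s \bi^2 f(\xi_s, I_s \gr) \, \vol$. The second, arising from horizontal integration by parts on a judiciously chosen $1$-form, will express $I$ as $-\int_M [3|(\bi^2 f)_{[3]}|^2 - |(\bi^2 f)_{[-1]}|^2] \, \vol$. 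Solving will then produce \eqref{1}.

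For the first expression, I will apply the Ricci identity on the third line of \eqref{boh2} with $X = e_c$, $Y = I_s e_c$, $Z = I_s \gr$ and sum over $c$. The pointwise identities $\omega_t(e_c, I_s e_c) = g(I_t e_c, I_s e_c) = \delta_{ts}$ and $\sum_c R(e_c, I_s e_c, I_s \gr, \gr) = 4n \tau_s(I_s \gr, \gr)$ reduce this to
\[
\sum_c [\bi^3 f(e_c, I_s e_c, I_s \gr) - \bi^3 f(I_s e_c, e_c, I_s \gr)] = -4n \tau_s(I_s \gr, \gr) - 8n \bi^2 f(\xi_s, I_s \gr).
\]
Since $I_s$ is skew-symmetric on $H$, for any bilinear form $P$ on $H$ one has $\sum_c P(I_s e_c, e_c) = -\sum_c P(e_c, I_s e_c)$, because the symmetric part of $P$ is annihilated by the Frobenius contraction with the skew tensor $I_s$. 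Applied to $P(X,Y) = \bi^3 f(X,Y,I_s\gr)$, the two summands on the left are negatives of one another, so summing over $s$ and integrating yields $2I = -4n \int_M \sum_s \tau_s(I_s \gr, \gr) \, \vol - 8n \int_M \sum_s \bi^2 f(\xi_s, I_s \gr) \, \vol$.

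For the second expression, I will apply Proposition~\ref{div} to the horizontal $1$-form $\beta^s(X) := \bi^2 f(I_s X, I_s \gr)$. A direct Leibniz calculation (with the $\nabla_{e_c} e_c$ contributions cancelling algebraically between the two expansions) gives
\[
(\nabla_{e_c} \beta^s)(e_c) = \bi^3 f(e_c, I_s e_c, I_s \gr) + \bi^2 f((\nabla_{e_c} I_s) e_c, I_s \gr) + \bi^2 f(I_s e_c, (\nabla_{e_c} I_s) \gr) + \bi^2 f(I_s e_c, I_s \nabla_{e_c} \gr).
\]
Expanding $\nabla_{e_c} I_s$ via \eqref{xider} and summing over $s$ with the cyclic convention, each $\alpha_j(e_c)$-coefficient in $\sum_s \bi^2 f((\nabla_{e_c} I_s) e_c, I_s \gr)$ is the exact negative of the corresponding coefficient in $\sum_s \bi^2 f(I_s e_c, (\nabla_{e_c} I_s) \gr)$, so these two sums cancel. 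For the remaining term, I use $\nabla_{e_c} \gr = \sum_d \bi^2 f(e_c, e_d) e_d$ and the decomposition \eqref{comp} to compute
\[
\sum_{s,c} \bi^2 f(I_s e_c, I_s \nabla_{e_c} \gr) = \sum_{c,d} \bi^2 f(e_c, e_d) [4 (\bi^2 f)_{[3]}(e_c, e_d) - \bi^2 f(e_c, e_d)] = 3 |(\bi^2 f)_{[3]}|^2 - |(\bi^2 f)_{[-1]}|^2,
\]
where the final equality uses the orthogonality of the $[3]$ and $[-1]$ components as distinct eigenspaces of the Casimir $\Upsilon$ from \eqref{e:cross}. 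Proposition~\ref{div} then gives $I = -\int_M [3|(\bi^2 f)_{[3]}|^2 - |(\bi^2 f)_{[-1]}|^2] \, \vol$.

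Combining the two expressions for $I$ and solving for $\int_M \sum_s \bi^2 f(\xi_s, I_s \gr) \, \vol$ produces \eqref{1}. The main technical obstacle is the verification of the cyclic cancellation of the $\nabla I_s$ contributions in the integration-by-parts step: this requires listing the six summands from $\sum_s \bi^2 f((\nabla_{e_c} I_s) e_c, I_s \gr)$ and the six from $\sum_s \bi^2 f(I_s e_c, (\nabla_{e_c} I_s) \gr)$ across the three cyclic choices of $(s,t,r)$ and checking that the $\alpha_j(e_c)$-coefficients pair up with opposite signs; a secondary point is the Casimir-based orthogonality of the $[3]$ and $[-1]$ decomposition in the Frobenius inner product on endomorphisms of $H$.
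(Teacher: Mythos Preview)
Your proof is correct and follows essentially the same route as the paper: your $1$-form $\sum_s\beta^s$ is exactly the paper's $B(X)=\sum_s\nabla^2 f(I_sX,I_se_a)\,df(e_a)$, and both arguments compute its divergence, apply the third Ricci identity, and identify the Casimir term $g(\Upsilon\nabla^2f,\nabla^2f)=3|(\nabla^2f)_{[3]}|^2-|(\nabla^2f)_{[-1]}|^2$. The only difference is that the paper first observes $B$ is $Sp(n)Sp(1)$-invariant and then works in a qc-normal frame (where the $\alpha_s$ vanish), whereas you verify the cyclic cancellation of the $\nabla I_s$ terms directly---this direct check is precisely the infinitesimal form of that invariance, so the two computations are equivalent.
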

\begin{proof}
 We recall that an orthonormal frame\newline
\centerline{$\{e_1,e_2=I_1e_1,e_3=I_2e_1,e_4=I_3e_1,\dots,
e_{4n}=I_3e_{4n-3}, \xi_1, \xi_2, \xi_3 \}$}\newline is a
qc-normal frame (at a point) if the connection 1-forms of the
Biquard connection vanish (at that point). Lemma~4.5 in \cite{IMV}
asserts that a qc-normal frame exists at each point of a qc
manifold.

Using the identification of the 3-dimensional vector spaces
spanned by $\{\xi_1,\xi_2, \xi_3\} $ and $\{I_1,I_2,I_3 \}$ with
$\mathbb{R}^3$, the restriction of the action of $Sp(n)Sp(1)$ to
this spaces can be identified with  the action of the group
$SO(3)$, i.e.,  $\xi_i = \sum_{t=1}^3 \Psi_{it} \bar \xi_t $ and  $I_i = \sum_{t=1}^3 \Psi_{it} \bar I_t $, $i=1,2,3$ with $\Psi\in SO(3)$. One verifies easily that  the horizontal 1-form
$$B(X)=\sum_{s=1}^3\bi^2f(I_sX,I_se_a)df(e_a)$$
 is $Sp(n)Sp(1)$ invariant on $\mathbb H$, for example $\bar B(X) = (det \Psi)\, B(X)=B(X)$.
Thus, it is sufficient to compute the divergence of $B$  in a qc-normal frame.  To avoid the introduction of new
variables we shall assume that $\{e_1, \dots,
e_{4n}, \xi_1, \xi_2, \xi_3 \}$ is a qc-normal frame.

Using that the Biquard connection preserves the
splitting of $TM$, the Ricci identities \eqref{boh2}, the definition of $\tau_s$ and \eqref{torha}, we find
\begin{multline}\label{div0}
\bi^*B=\sum_{s=1}^3\Big[\bi^3f(e_b,I_se_b,I_se_a) df(e_a)+\bi^2f(I_se_b,I_se_a)\bi^2f(e_b,e_a)\Big]\\=
\frac12\sum_{s=1}^3\Big[\bi^3f(e_b,I_se_b,I_se_a)-\bi^3f(I_se_b,e_b,I_se_a)\Big]df(e_a)+
\sum_{s=1}^3\bi^2f(I_se_b,I_se_a)\bi^2f(e_b,e_a)\\=-\frac12R(e_b,I_se_b,I_se_a,e_c)df(e_c)df(e_a)-\sum_{s=1}^3\omega_s(e_b,I_se_b)\bi^2f(\xi_s,I_se_a)df(e_a)+\sum_{s=1}^3\bi^2f(I_se_b,I_se_a)\bi^2f(e_b,e_a)\\=-2n\sum_{s=1}^3\tau_{s}(I_s\gr,\gr)-4n\sum_{s=1}^3\bi^2f(\xi_s,I_s\gr) + g\left (\Upsilon \bi^2 f, \bi^2 f \right ),
\end{multline}
where we used \eqref{e:cross} in the last term and the convention $I_s\alpha(X)=-\alpha(I_sX)$ for a horizontal 1-form $\alpha$. Using the orthogonality of the spaces $\Psi_{[3]}$ and $\Psi_{[-1]}$ we have
\[
g\left (\Upsilon \bi^2 f, \bi^2 f \right )= 3|(\bi^2f)_{[3]}|^2- |(\bi^2f)_{[-1]}|^2.
\]
A substitution of the last equality in \eqref{div0} and the divergence formula give \eqref{1}. This completes the proof of the Lemma.
\end{proof}
The second integral formula for the last term in \eqref{boh} follows.
\begin{lemma}\label{gr2}
On a compact qc manifold of dimension $4n+3$ the following integral formula holds
\begin{equation}\label{2}
\int_M\sum_{s=1}^3\bi^2f(\xi_s,I_s\gr)\vol=-\int_M\Big[4n\sum_{s=1}^3(df(\xi_s))^2
+\sum_{s=1}^3T(\xi_s,I_s\gr,\gr)\Big] \vol.
\end{equation}
\end{lemma}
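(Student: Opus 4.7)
\medskip
\noindent\emph{Proof plan.} The strategy mirrors the proof of Lemma~\ref{gr1}: we exhibit an $Sp(n)Sp(1)$-invariant horizontal one-form whose horizontal divergence produces precisely the quantity $\sum_{s=1}^3\bigl[\bi^2 f(\xi_s, I_s\gr) + T(\xi_s, I_s\gr, \gr) + 4n\,(df(\xi_s))^2\bigr]$, and then conclude via Proposition~\ref{div}. The natural candidate is
$$D(X) = \sum_{s=1}^3 df(\xi_s)\, df(I_s X).$$
That $D$ is globally well-defined follows from $SO(3)$-invariance: under a rotation $\xi_s \mapsto \sum_t \Psi_{st}\bar\xi_t$ and $I_s \mapsto \sum_t \Psi_{st}\bar I_t$, the coefficients $\sum_s \Psi_{st}\Psi_{su}$ collapse to $\delta_{tu}$, exactly in the spirit of the invariance argument used for the form $B$ in Lemma~\ref{gr1}.

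Next, compute $\bi^* D$ in a qc-normal frame at an arbitrary point (existence as cited in the proof of Lemma~\ref{gr1}). Since $\bi\xi_s = 0$ and $\bi I_s = 0$ at such a point by \eqref{xider}, applying the product rule and the identity $\sum_a \alpha(e_a)\,g(Z,e_a)=\alpha(Z)$ gives
$$\bi^* D = -\sum_{s=1}^3\bigl[\bi^2 f(e_a,\xi_s)\, df(I_s e_a) + df(\xi_s)\, \bi^2 f(e_a, I_s e_a)\bigr].$$
For the first bracket, the Ricci identity $\bi^2 f(X,\xi_s) - \bi^2 f(\xi_s, X) = T(\xi_s, X, \gr)$ from \eqref{boh2}, combined with $df(I_s e_a) = -g(I_s\gr, e_a)$ and the above contraction applied to the horizontal 1-forms $\bi^2 f(\xi_s,\cdot)$ and $T(\xi_s,\cdot,\gr)$, yields
$$\sum_a \bi^2 f(e_a,\xi_s)\, df(I_s e_a) = -\bi^2 f(\xi_s, I_s\gr) - T(\xi_s, I_s\gr, \gr).$$

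For the second bracket, split $\bi^2 f(e_a, I_s e_a)$ into symmetric and antisymmetric parts. The symmetric part contracts to zero by the $g$-skewness of $I_s$ (a standard reindexing $e_a \mapsto I_s e_a$ shows $\sum_a S(e_a, I_s e_a) = -\sum_a S(e_a, I_s e_a)$ for any $g$-symmetric $S$). The antisymmetric part is controlled by $\bi^2 f(X,Y)-\bi^2 f(Y,X) = -2\sum_t \omega_t(X,Y)\,df(\xi_t)$ from \eqref{boh2} together with the trace identity $\sum_a \omega_t(e_a, I_s e_a) = 4n\,\delta_{ts}$, giving $\sum_a \bi^2 f(e_a, I_s e_a) = -4n\, df(\xi_s)$. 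Assembling,
$$\bi^* D = \sum_{s=1}^3 \bigl[\bi^2 f(\xi_s, I_s\gr) + T(\xi_s, I_s\gr, \gr) + 4n\,(df(\xi_s))^2\bigr],$$
and integration against $\vol$ followed by Proposition~\ref{div} rearranges to \eqref{2}.

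The only genuine subtlety is the mixed trace $\sum_a \bi^2 f(e_a, I_s e_a) = -4n\, df(\xi_s)$: it is the sole mechanism producing the vertical-derivative squared terms in the final identity, and it relies crucially on the non-symmetry of $\bi^2 f$ caused by the vertical torsion of the Biquard connection. Everything else is qc-normal frame bookkeeping parallel to Lemma~\ref{gr1}, plus the right choice of $D$.
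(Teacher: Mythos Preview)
Your proof is correct and follows essentially the same approach as the paper: your one-form $D$ coincides with the paper's $C(X)=\sum_s df(I_sX)\,df(\xi_s)$, and the divergence computation is identical, invoking the same Ricci identity \eqref{boh2} and the trace $\sum_a \bi^2 f(e_a,I_se_a)=-4n\,df(\xi_s)$ (which the paper records as \eqref{xi1}). The only cosmetic difference is that you spell out the symmetric/antisymmetric splitting behind \eqref{xi1} rather than citing it directly.
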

\begin{proof}
Note, that by definition we have $$\Big [ g \left(\nabla^2f , \omega_s \right)\Big ]^2=\Big[\bi^2f(e_a,I_se_a)\Big]^2.$$ From the Ricci identities we have
\begin{equation}\label{xi1}
\HessfIs =\bi^2f(e_a,I_se_a)=-4ndf(\xi_s)
\end{equation}
which implies
\begin{equation}\label{vert1}
16n^2\int_M\sum_{s=1}^3 \left ( df(\xi_s)\right )^2\,\vol=\int_M\HessfI\,\vol\\=-4n\int_M\sum_{s=1}^3\HessfIs {\,} df(\xi_s)\,\vol.
\end{equation}
Let us consider the $Sp(n)Sp(1)$ invariant horizontal 1-form defined by
$$C(X)=\sum_{s=1}^3df(I_sX)df(\xi_s)$$
whose divergence is (computing as usual in a qc normal frame)
\begin{multline}\label{vert2}
\bi^*C=\sum_{s=1}^3\Big[\bi^2f(e_a,I_se_a){\,} df(\xi_s)+\bi^2f(e_a,\xi_s){\,} df(I_se_a)\Big]\\=
\sum_{s=1}^3\Big[\HessfIs{\,} df(\xi_s)-\bi^2f(\xi_s,I_s\gr)-T(\xi_s,I_s\gr,\gr)\Big].
\end{multline}
In the above calculation we used the second formula of \eqref{boh2} to obtain the second equality of \eqref{vert2}. Integrate \eqref{vert2} over $M$ and use \eqref{vert1} to get \eqref{2} which completes the proof of the lemma.
\end{proof}

\section{Proof of Theorem~\ref{main1}}
\begin{proof}
We begin by integrating the Bochner type formula \eqref{bohh} over the compact qc manifold $M$ of dimension $4n+3$. Using the divergence formula we come to
\begin{multline}\label{bohin}
0=\int_M\Big[ -(\triangle f)^2+|(\bi^2f)_{[3]}|^2+|(\bi^2f)_{[-1]}|^2+Ric(\nabla f,\nabla f)+2\sum_{s=1}^3T(\xi_s,I_s\gr,\gr)\Big]\vol\\+
4\int_M\sum_{s=1}^3\bi^2f(\xi_s,I_s\gr)\,\vol.
\end{multline}
Following Greenleaf \cite{Gr}, we represent the last term in \eqref{bohin} as follows
$$ \int_M\sum_{s=1}^3\bi^2f(\xi_s,I_s\gr)\,\vol= (1-c)\int_M\sum_{s=1}^3\bi^2f(\xi_s,I_s\gr)\,\vol+c\int_M\sum_{s=1}^3\bi^2f(\xi_s,I_s\gr)\,\vol,$$
where $c$ is a constant. Then we apply  Lemma~\ref{gr1} and Lemma~\ref{gr2}, correspondingly, to the first and the second terms in the obtained identity  after which the above equality \eqref{bohin} takes the form
\begin{multline}\label{bohin1}
0=\int_M\Big[ -(\triangle f)^2+|(\bi^2f)_{[3]}|^2+|(\bi^2f)_{[-1]}|^2+Ric(\nabla f,\nabla f)+2\sum_{s=1}^3T(\xi_s,I_s\gr,\gr)\Big] \vol\\
+4(1-c)\int_M \Big[ \frac3{4n}|(\bi^2f)_{[3]}|^2-\frac1{4n}|(\bi^2f)_{[-1]}|^2-\frac12\sum_{s=1}^3\tau_{s}(I_s\gr,\gr) \Big]\vol\\
-4c\int_M \Big[4n\sum_{s=1}^3(df(\xi_s))^2
+\sum_{s=1}^3T(\xi_s,I_s\gr,\gr)\Big] \vol.
\end{multline}
Equation  \eqref{bohin1} can be simplified as follows
\begin{multline}\label{bohin2}
0=\int_M\Big[ -(\triangle f)^2+\left (1+\frac{3(1-c)}n\right)|(\bi^2f)_{[3]}|^2+\left(1-\frac{(1-c)}n\right)|(\bi^2f)_{[-1]}|^2 + Ric(\nabla f,\nabla f)
\Big]\vol
\\
+\int_M \Big[-16nc\sum_{s=1}^3(df(\xi_s))^2
-2(1-c)\sum_{s=1}^3\tau_{s}(I_s\gr,\gr)+(2-4c)\sum_{s=1}^3T(\xi_s,I_s\gr,\gr)\Big]\vol.
\end{multline}
Using that $\left \{ \frac {1}{2\sqrt n} \omega_s \right \}$ is an orthonormal set in $\Psi_{[-1]}$ we have
\begin{equation}\label{coshy1}
|(\bi^2f)_{[-1]}|^2 \geq \frac1{4n}\HessfI=4n\sum_{s=1}^3(df(\xi_s))^2,
\end{equation}
while a projection on $\left \{ \frac {1}{2\sqrt n} g \right \}$ gives
\begin{equation}\label{coshy3}
|(\bi^2f)_{[3]}|^2 \geq \frac1{4n}(\triangle f)^2.
\end{equation}
We obtain from \eqref{bohin2} taking into account \eqref{coshy3} and \eqref{coshy1} that for any constant $c$ such that \begin{equation}\label{const}
1+\frac{3(1-c)}n \ge 0, \qquad 1-\frac{(1-c)}n\ge 0
\end{equation}
we have the following inequality
\begin{multline}\label{bohin3}
0 \ge \int_M \Big[\Big(\frac1{4n}+\frac{3(1-c)}{4n^2}-1\Big)(\triangle f)^2+16n^2\Big(\frac1{4n}-\frac{(1-c)}{4n^2}-\frac{c}n\Big)\sum_{s=1}^3(df(\xi_s))^2 \Big]\vol
\\+\int_M \Big[Ric(\nabla f,\nabla f)
-2(1-c)\sum_{s=1}^3\tau_{s}(I_s\gr,\gr)+(2-4c)\sum_{s=1}^3T(\xi_s,I_s\gr,\gr)\Big]\vol.
\end{multline}
The coefficient in front of  $\sum_{s=1}^3(df(\xi_s))^2$ is non-negative provided $c\leq\frac{n-1}{4n-1}$, so in order to cancel the $\sum_{s=1}^3(df(\xi_s))^2$  term we take
\begin{equation}\label{c}
c=\frac{n-1}{4n-1}.
\end{equation}
Note that the inequalities \eqref{const} are satisfies and with this choice of  $c$ \eqref{bohin3} yields
\begin{multline}\label{bohin4}
0 \ge \int_M \Big(\frac{2(1-n)(2n+1)}{n(4n-1)} \Big)(\triangle f)^2 \vol
\\+\int_M \Big[Ric(\nabla f,\nabla f) -\frac{6n}{4n-1}\sum_{s=1}^3\tau_{s}(I_s\gr,\gr)+\frac{4n+2}{4n-1}\sum_{s=1}^3T(\xi_s,I_s\gr,\gr)\Big]\vol.
\end{multline}
Applying the identities from  Theorem~\ref{sixtyseven} and \eqref{propt} we calculate
\begin{equation}\label{tau}
\sum_{s=1}^3\tau_{s}(I_sX,Y)=\frac{n+2}nT^0(X,Y) +3Sg(X,Y).
\end{equation}
Using the first equality in Theorem~\ref{sixtyseven}, \eqref{boh4} and \eqref{tau}, we express the second line in \eqref{bohin4} in terms of $Ric, T^0$ and $U$ as follows
\begin{multline}\label{rtu}
Ric(\nabla f,\nabla f) -\frac{6n}{4n-1}\sum_{s=1}^3\tau_{s}(I_s\gr,\gr)+\frac{4n+2}{4n-1}\sum_{s=1}^3T(\xi_s,I_s\gr,\gr)\\=
\frac{2(n-1)(2n+1)}{(4n-1)(n+2)}\Big[Ric(\gr,\gr)+ \alpha_n T^0(\gr,\gr)+ \beta_n U(\gr,\gr)\Big],
\end{multline}
where
\begin{equation}
\alpha_n=\frac {2(4n+5)}{2n+1}, \qquad \beta_n=3\frac{2n^2+5n-1}{(2n+1)(n-1)}. 
\end{equation}
At this point we let $f$ be an eigenfunction of the sub-Laplacian with eigenvalue $\lambda$, i.e., \eqref{eig} holds. An integration by parts yields
\begin{equation}\label{parts}
\int_M(\triangle f)^2\,\vol=\lambda\int_Mf\triangle f\,\vol=\lambda\int_M|\gr|^2\,\vol.
\end{equation}
Let us assume  $n\ge 2$. A substitution of \eqref{parts} and \eqref{rtu} in \eqref{bohin4} gives
\begin{equation}\label{bohin5}
0 \ge \int_M -\lambda|\gr|^2+\frac{n}{n+2}\Big[ Ric(\gr,\gr)+\alpha_n T^0(\gr,\gr)
+ \beta_n U(\gr,\gr)\Big] \vol.
\end{equation}
The conditions of the theorem together with \eqref{bohin5} yield the inequality
\begin{equation}\label{inin}
0 \ge\int_M\left (-\lambda+\frac{n}{n+2}k_0\right )|\gr|^2\,\vol,
\end{equation}
which implies the desired inequality $$\lambda \ge \frac{n}{n+2}k_0.$$
This completes the proof of Theorem~\ref{main1}.
\end{proof}

\begin{rmrk}
Suppose we have the case of equality in Theorem~\ref{main1}, i.e., we have
\begin{equation*}\label{eq1}
\lambda = \frac{n}{n+2}k_0, \qquad \triangle f=\frac{n}{n+2}k_0 f
\end{equation*}
For $c$ given by \eqref{c} equalities in \eqref{coshy1} and \eqref{coshy3} must hold which implies that the horizontal Hessian of the eigenfunction $f$ is given by the next  equation
\begin{equation}\label{eq7}
\bi^2f(X,Y)=-\frac{k_0}{4(n+2)}fg(X,Y)-\sum_{s=1}^3df(\xi_s)\omega_s(X,Y).
\end{equation}
\end{rmrk}

\section{Proof of Theorem~\ref{main0}}
We proof Theorem~\ref{main0} using the Lichnerowicz' estimate for the first positive eigenvalue of Riemannian Laplacian and the Obata' theorem \cite{O3} which says that the equality in the Lichnerowicz' estimate is achieved only on the round sphere.
\subsection{Relation between the Laplacian and the sub-Laplacian}
We start with the next lemma relating the Riemannian Laplacian and the sub-Laplacian.
\begin{lemma}\label{lapl}
Let $M$ be a $(4n+3)$-dimensional qc manifold. Then the sub-Laplacian $\triangle$ and the Riemannian Laplacian $\triangle^g$, corresponding to the Levi-Civita connection $\nabla^g$ of the extended metric $g$, are connected by
\begin{equation}\label{eq18}
\triangle^g f=\triangle f-\sum_{s=1}^3\xi_s^2f+df(\sum_{s=1}^3\nabla_{\xi_s}\xi_s).
\end{equation}
\end{lemma}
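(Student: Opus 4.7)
The plan is to expand both Laplacians in the same orthonormal frame $\{e_1,\dots,e_{4n},\xi_1,\xi_2,\xi_3\}$ of $TM$ and then to use the compatibility formula \eqref{lcbi} to relate $\nabla^g$ and the Biquard connection $\nabla$. Concretely, I would write
\begin{equation*}
\triangle^g f = -\sum_{a=1}^{4n}\bigl(e_a(e_a f) - (\nabla^g_{e_a}e_a)f\bigr) - \sum_{s=1}^3 \bigl(\xi_s(\xi_s f) - (\nabla^g_{\xi_s}\xi_s) f\bigr),
\end{equation*}
and similarly $\triangle f = -\sum_a \bigl(e_a(e_a f) - (\nabla_{e_a}e_a) f\bigr)$, so that proving the lemma reduces to comparing $\sum_a\nabla^g_{e_a}e_a$ with $\sum_a\nabla_{e_a}e_a$, and $\nabla^g_{\xi_s}\xi_s$ with $\nabla_{\xi_s}\xi_s$.

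The key identity is obtained by specializing \eqref{lcbi} to $A=B$ and using the skew-symmetry of the torsion in its first two slots; this gives, for any vector field $C$,
\begin{equation*}
g(\nabla^g_A A,C) = g(\nabla_A A,C) + g(T(A,C),A).
\end{equation*}
Substituting $A=e_a$ and summing over $a$, the discrepancy reduces to $\sum_a g(T(e_a,C),e_a)$, which I would show vanishes in each of the relevant cases. For $C\in H$ the formula \eqref{torha} gives $T(e_a,C)\in V$, hence orthogonal to $e_a$; for $C=\xi_s$, $T(e_a,\xi_s)=-T_{\xi_s}(e_a)\in H$ and the sum equals $-\mathrm{tr}\,T_{\xi_s}$, which is zero by the trace-free property of $T_{\xi_s}$ stated in Biquard's theorem. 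Hence $\sum_a\nabla^g_{e_a}e_a=\sum_a\nabla_{e_a}e_a$ and the horizontal part of $\triangle^g f$ coincides exactly with $\triangle f$.

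For the vertical part, taking $A=\xi_s$ in the same identity produces the error term $g(T(\xi_s,C),\xi_s)$, which I would verify is zero for every choice of $C$. If $C\in H$ then $T(\xi_s,C)\in H$ and is orthogonal to $\xi_s$; if $C=\xi_t$, the fourth line of Theorem~\ref{sixtyseven} yields $T(\xi_s,\xi_s)=0$ and, for $t\neq s$, $T(\xi_s,\xi_t)=-S\xi_r-[\xi_s,\xi_t]_H$ with $r\notin\{s,t\}$, so the $\xi_s$-component vanishes. Thus $\nabla^g_{\xi_s}\xi_s=\nabla_{\xi_s}\xi_s$, and assembling the pieces gives exactly the claimed formula. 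The only delicate point is the bookkeeping: one must isolate the three torsion facts — $T(H,H)\subset V$, $T_{\xi_s}$ trace-free on $H$, and $g(T(\xi_s,\xi_t),\xi_s)=0$ — and check the sign in the specialized form of \eqref{lcbi}; no curvature or higher-order information is needed.
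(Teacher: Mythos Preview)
Your proof is correct and follows essentially the same approach as the paper: both reduce the comparison of $\triangle^g f$ and $\triangle f$ to the torsion identities $g(T(e_a,A),e_a)=0$ and $g(T(\xi_s,A),\xi_s)=0$, which the paper records as \eqref{idt} and you verify case by case. The only cosmetic difference is that the paper applies \eqref{lcbi} to $\nabla^g_A\widetilde{df}$ while you apply it to $\nabla^g_A A$, but the torsion terms that must vanish are literally the same.
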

\begin{proof}
By definition, $\triangle^gf=-\sum_{a=1}^{4n}\nabla^gdf(e_a,e_a)-\sum_{s=1}^3\nabla^gdf(\xi_s,\xi_s)$,
where  $\{e_1,\ldots,e_{4n},\xi_1,\xi_2,\xi_3\}$ is an orthonormal basis of $H\oplus V$. Write $\widetilde{df}$ for the gradient of $f$, the last equality  can be write in the form
\begin{equation}\label{eq19}
\triangle^gf=-g(\nabla_{e_a}^g\widetilde{df},e_a)-\sum_{s=1}^3g(\nabla_{\xi_s}^g\widetilde{df},\xi_s)=
-g(\bi_{e_a}\widetilde{df},e_a)-\sum_{s=1}^3g(\nabla_{\xi_s}\widetilde{df},\xi_s),
\end{equation}
where we used \eqref{lcbi} and the identities
\begin{equation}\label{idt}
T(e_a,A,e_a)=T(\xi_s,A,\xi_s)=0
\end{equation}
following from the properties of the torsion tensor $T$ of $\nabla$ listed in \eqref{sixtyfour}. Now, we get \eqref{eq18} from \eqref{eq19}.
\end{proof}

Next we give an estimate between the first eigenvalues of the  Riemannian Laplacian and the sub-Laplacian.
\begin{prop}
Let $M$ be a $(4n+3)$-dimensional closed compact qc manifold.  The first positive eigenvalue $\mu$ of the Riemannian Laplacian and the first positive eigenvalue $\lambda$ of the sub-Laplacian satisfy the following inequality
\begin{equation}\label{eigenv}
\mu\leq\lambda+\int_M\sum_{s=1}^3(df(\xi_s))^2 \vol
\end{equation}
for any smooth function $f$ with $\int_Mf^2\,\vol=1$.
\end{prop}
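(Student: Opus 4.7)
My plan is to take $f$ to be a first eigenfunction of the sub-Laplacian (normalized as in the statement) and use the variational (Rayleigh) characterization of the first positive eigenvalue of the Riemannian Laplacian $\triangle^g$ on the closed manifold $M$.

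First I would verify that such an $f$ is admissible in the Rayleigh quotient for $\mu$. Since $\triangle f=\lambda f$ with $\lambda>0$, integrating both sides against $\vol$ and using Proposition~\ref{div} (applied to the horizontal $1$-form $df$) gives $\lambda \int_M f\,\vol=\int_M\triangle f\,\vol=0$, hence $\int_M f\,\vol=0$. So $f$ is orthogonal to the constants and, being normalized by $\int_M f^2\,\vol=1$, it is a valid test function. The min-max principle for the self-adjoint operator $\triangle^g$ on $L^2(M,\vol)$ then yields
\begin{equation*}
\mu\ \leq\ \int_M |\nabla^g f|^2\,\vol.
\end{equation*}

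Next I would compute the right-hand side using the extended metric. With the orthonormal basis $\{e_1,\dots,e_{4n},\xi_1,\xi_2,\xi_3\}$ of $H\oplus V$, the total gradient splits into horizontal and vertical components, so
\begin{equation*}
|\nabla^g f|^2\ =\ \sum_{a=1}^{4n}(df(e_a))^2\ +\ \sum_{s=1}^3 (df(\xi_s))^2\ =\ |\nabla f|^2\ +\ \sum_{s=1}^3(df(\xi_s))^2.
\end{equation*}
It remains to identify the integral of $|\nabla f|^2$. Applying Proposition~\ref{div} to the horizontal $1$-form $\sigma(X)=f\,df(X)$ and noting that $\bi^*(f\,df)=-\nabla(f\,df)(e_a,e_a)=-|\nabla f|^2+f\triangle f$ gives the familiar integration-by-parts identity
\begin{equation*}
\int_M |\nabla f|^2\,\vol\ =\ \int_M f\,\triangle f\,\vol\ =\ \lambda\int_M f^2\,\vol\ =\ \lambda.
\end{equation*}
Combining the three displays yields $\mu\leq \lambda+\int_M\sum_{s=1}^3 (df(\xi_s))^2\,\vol$, which is \eqref{eigenv}.

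There is essentially no hard step here; the argument is a test-function comparison, and the only thing to check carefully is the metric decomposition of $|\nabla^g f|^2$ into horizontal and vertical contributions, which is immediate from the $g$-orthogonality of $H$ and $V=\mathrm{span}\{\xi_1,\xi_2,\xi_3\}$ built into the extended metric. (Note that the inequality is most interesting precisely for the eigenfunction $f$; the wording ``for any smooth function $f$'' in the statement is to be read as: the estimate is available for test functions supplied by the sub-Laplacian spectral problem.)
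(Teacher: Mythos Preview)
Your argument is correct and rests on the same variational characterization of $\mu$ that the paper uses, but the execution differs. The paper first invokes Lemma~\ref{lapl} to write $\triangle^g f=\triangle f-\sum_s\xi_s^2 f+df(\sum_s\nabla_{\xi_s}\xi_s)$, pairs this with $f$, and then eliminates the awkward $\xi_s^2$ and $\nabla_{\xi_s}\xi_s$ terms by computing the Riemannian divergence of $f\,df(\xi_s)\,\xi_s$ (using \eqref{lcbi} and the torsion identities \eqref{idt}) and integrating. You bypass all of this by working on the gradient side: the $g$-orthogonal splitting $H\oplus V$ immediately gives $|\nabla^g f|^2=|\nabla f|^2+\sum_s(df(\xi_s))^2$, and a single application of Proposition~\ref{div} to $f\,df$ identifies $\int_M|\nabla f|^2\vol$ with $\lambda$. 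Your route is shorter and avoids Lemma~\ref{lapl} and the vertical divergence computation entirely; the paper's route, on the other hand, records the operator identity \eqref{eq18} explicitly, which may be of independent interest. You also make explicit the verification $\int_M f\,\vol=0$ needed for admissibility in the Rayleigh quotient, which the paper leaves implicit. Both arguments tacitly use that $\vol$ is (up to a constant) the Riemannian volume of the extended metric $g$, so that the min--max principle for $\triangle^g$ is taken with respect to $\vol$.
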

\begin{proof}
From the variational characterization of the first eigenvalue and \eqref{eq18} we have the estimate
\begin{equation}\label{est}
\mu\leq\int_M(\triangle^gf)f\,\vol=\int_M(\triangle f)f\,\vol-\int_M\Big[\sum_{s=1}^3(\xi_s^2f)f-df(\sum_{s=1}^3\nabla_{\xi_s}\xi_s)f\Big]\,\vol.
\end{equation}
For the term $df(\sum_{s=1}^3\nabla_{\xi_s}\xi_s)$, we obtain consecutively
\begin{equation}\label{eq20}
df(\sum_{s=1}^3\nabla_{\xi_s}\xi_s)=g(\widetilde{df},\sum_{s=1}^{3}\nabla_{\xi_s}\xi_s)=
\sum_{t=1}^3df(\xi_t)g(\xi_t,\sum_{s=1}^{3}\nabla_{\xi_s}\xi_s)=\sum_{s,t=1}^{3}df(\xi_{t})g(\nabla_{\xi_s}\xi_s,\xi_t),
\end{equation}
where we used for the third equality that the Biquard connection is metric.

Consider the vector field $fdf(\xi_s)\xi_s$. We calculate its  Riemannian divergence $div[fdf(\xi_s)\xi_s]$ as follows
\begin{multline}\label{diver}
div[f(\xi_sf)\xi_s]=(df(\xi_s))^2+(\xi_s^2f)f+fdf(\xi_s)\Big[g(\bi^g_{e_a}\xi_s,e_a)+
\sum_{t=1}^3g(\bi^g_{\xi_t}\xi_s,\xi_t)\Big]\\=(df(\xi_s))^2+(\xi_s^2f)f+fdf(\xi_s)\Big[g(\bi_{e_a}\xi_s,e_a)+
\sum_{t=1}^3g(\bi_{\xi_t}\xi_s,\xi_t)\Big]\\=(df(\xi_s))^2+(\xi_s^2f)f-
fdf(\xi_s)\sum_{t=1}^3g(\bi_{\xi_t}\xi_t,\xi_s)\Big],
\end{multline}
where we used \eqref{lcbi},  \eqref{idt} and the fact that the Biquard connection preserves the splitting $H\oplus V$ to established the second and the third equality. A substitution of \eqref{diver} and \eqref{eq20} in \eqref{est} followed by an application of the Riemannian divergence formula give inequality \eqref{eigenv}.
\end{proof}
\begin{proof}[Proof of Theorem~\ref{main0}]
Suppose that $M$ is a qc-Einstein structure of dimension at least eleven with a normalized qc scalar $S=2$, hence the qc Ricci tensor given by the first equality in \eqref{sixtyfour} satisfies $Ric=4(n+2)g$. Suppose the equality case of Theorem~\ref{main0} holds, i.e., $\lambda=4n$ and let $\lap f = \lambda f$. After a possible rescaling of $f$ and using the divergence formula we have then the following identities
\begin{equation}\label{eq4}
\begin{aligned}
& \lambda=4n,\qquad \triangle f= 4nf,\qquad\int_Mf^2\vol=1,\\
& \int_M |\gr|^2 \vol= \lambda=\frac {1}{\lambda}\int_M |\lap f|^2\vol.
\end{aligned}
\end{equation}
In this case Lemmas \ref{gr1} and \ref{gr2} together with equation \eqref{tau} yield
\begin{equation}\label{eq111}
\int_M\sum_{s=1}^3(df(\xi_s))^2\vol=3.
\end{equation}
Therefore, from \eqref{eq111}  we have the inequality
  \begin{equation}\label{Lich1}
  \mu\leq 4n+3.
  \end{equation}
On the other hand, any qc-Einstein manifold with a positive qc scalar curvature is locally 3-Sasakian \cite{IMV} and it is well known that a 3-Sasakian manifold is Einstein (with respect to the extended metric) with Riemannian scalar curvature (4n+2) \cite{Kas}, i.e., the Riemannian Ricci tensor $Ric^g$ is given by
 \begin{equation}\label{eq21}
 Ric^g(A,A)=(4n+2)g(A,A).
 \end{equation}
 By Lichnerowicz' theorem  and \eqref{eq21} we have
 \begin{equation}\label{Lich}
 \mu\geq 4n+3.
 \end{equation}
 The inequalities \eqref{Lich1} and \eqref{Lich} yield the equality
 \begin{equation}\label{eq22}
 \mu=4n+3.
 \end{equation}
 Therefore, by Obata's result we conclude that the manifold $(M,g)$ is isometric to the sphere $S^{4n+3}(1)$ and hence the manifold $(M,g,\mathbb Q)$ is qc equivalent to the $3$-Sasakian sphere of dimension $4n+3$. This completes the proof of Theorem~\ref{main0}.
\end{proof}

\section{Sharp estimates for square integrals of derivatives}\label{s:cordes}
In this section we prove Theorem \ref{t:lapl-hess est}.
\begin{proof}[Proof of Theorem \ref{t:lapl-hess est}]
 Notice that we are using a function which vanishes outside some compact so the integrals are well defined. The proof is similar to the proof of Theorem \ref{main1} except we have to express $|(\bi^2 f)_{[3]}|^2$ in two different ways. This is the place where the qc case differs from the CR case. We start with the identity \eqref{bohin}, in which we first move the integral of the square of the sub-Laplacian to the left-hand side of the equality. Then we write $$ |(\bi^2 f)_{[3]}|^2=(1-c)\,|(\bi^2 f)_{[3]}|^2+c\, |(\bi^2 f)_{[3]}|^2$$ and use \eqref{coshy3} to obtain
\[
|(\bi^2 f)_{[3]}|^2\geq \frac {1-c}{4n}\,|\lap f|^2 + c\, |(\bi^2 f)_{[3]}|^2
\]
when $1-c\geq 0$. Finally, we use \eqref{1} for the last term in the thus obtained form of \eqref{bohin}. The result is the following inequality (valid for $1-c\geq 0$)
\begin{multline}\label{e:cordes1}
\left ( 1-\frac {1-c}{4n} \right )\int_M |\triangle f|^2\vol \geq \int_M \Big[\Big(c+\frac{3}{n}\Big)|(\bi^2f)_{[3]}|^2 + \Big(1-\frac{1}{n}\Big)|(\bi^2f)_{[-1]}|^2\Big]\vol\\
+\int_M \Big[Ric(\nabla f,\nabla f)
-2\sum_{s=1}^3\tau_{s}(I_s\gr,\gr)+ 2\sum_{s=1}^3T(\xi_s,I_s\gr,\gr)\Big] \vol.
\end{multline}
In order to obtain the norm of horizontal Hessian we solve for $c$ the equation $$c+\frac 3n = 1-\frac 1n,$$ which gives $c=(n-4)/n$. Since $1-c=4/n>0$ we let $c=(n-4)/n$ in the above inequality \eqref{e:cordes1} which becomes
\begin{multline}\label{e:cordes2}
\frac {n^2-1}{n^2}\int_M|\triangle f|^2\vol \geq \frac {n-1}{n}\int_M  |\bi^2 f|^2\vol\\
+\int_M \Big[Ric(\nabla f,\nabla f)
-2\sum_{s=1}^3\tau_{s}(I_s\gr,\gr)+ 2\sum_{s=1}^3T(\xi_s,I_s\gr,\gr)\Big] \vol.
\end{multline}
 Recalling the formula for the Ricci tensor in Theorem \ref{sixtyseven}, \eqref{boh4} and \eqref{tau} after a short simplification (using that $n>1$) we obtain the desired inequality, which completes the proof.
\end{proof}
For a qc-Einstein manifold, where $T^0=U=0$,  Theorem \ref{t:lapl-hess est} gives the next corollary taking into account that a qc-Einstein manifold of dimension eleven and higher is of constant scalar curvature, see \cite{IMV}.
\begin{cor}
Let $(M, \eta)$ be a $(4n+3)$-dimensional qc-Einstein manifold, $n>1$. For any $f\in \mathcal{C}_o^\infty (M)$ we have
\begin{equation}\label{e:sasakian lapl-hess est}
\int_M \vert \lap f \vert^2\vol \geq \frac {n}{n+1}\int_M \vert \bi^2 f\vert^2\vol
+ \frac {2n^2S}{n+1}\int_M |\gr|^2 \vol.
\end{equation}
\end{cor}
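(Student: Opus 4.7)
The plan is to derive the corollary as an immediate specialization of Theorem \ref{t:lapl-hess est}, using only the qc-Einstein hypothesis and the structural fact that qc-Einstein manifolds in dimension at least eleven have constant normalized qc scalar curvature.

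First I would recall the second form of the inequality in Theorem \ref{t:lapl-hess est}, namely
\begin{equation*}
\int_M \vert \lap f \vert^2\vol \geq \frac {n}{n+1}\int_M \vert \bi^2 f\vert^2\vol
+ \int_M \Bigl[ \frac {2n(n+2)}{n+1}T^0(\gr, \gr) + \frac {4n^2}{n-1}U(\gr,\gr) +\frac {2n^2}{n+1}S|\gr|^2 \Bigr]\vol,
\end{equation*}
which holds for any $f\in \mathcal{C}_o^\infty (M)$ on a $(4n+3)$-dimensional qc manifold with $n>1$.

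Next I would invoke the qc-Einstein hypothesis. By the last part of Theorem \ref{sixtyseven}, the qc-Einstein condition is equivalent to the vanishing of the torsion endomorphism of the Biquard connection, i.e., $T^0 \equiv 0$ and $U \equiv 0$ on $M$. Consequently, the second and third tensorial terms in the integrand above drop out identically, leaving
\begin{equation*}
\int_M \vert \lap f \vert^2\vol \geq \frac {n}{n+1}\int_M \vert \bi^2 f\vert^2\vol
+ \frac {2n^2}{n+1}\int_M S\, |\gr|^2 \vol.
\end{equation*}

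Finally I would use that for $n>1$ (so $\dim M = 4n+3\geq 11$), the normalized qc scalar curvature $S$ of a qc-Einstein manifold is a constant, as recorded in Theorem \ref{sixtyseven} and noted in the paragraph immediately preceding the corollary. Therefore $S$ pulls out of the integral, producing exactly \eqref{e:sasakian lapl-hess est}. There is no real obstacle here: the only subtle point is that factoring $S$ out of the integral genuinely requires $n>1$, which is precisely the hypothesis of the corollary, while the $T^0 = U = 0$ reduction is an immediate consequence of the qc-Einstein assumption.
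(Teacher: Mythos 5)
Your proposal is correct and coincides with the paper's own argument: the paper likewise specializes Theorem~\ref{t:lapl-hess est} using the equivalence of the qc-Einstein condition with $T^0=U=0$ (Theorem~\ref{sixtyseven}), and pulls $S$ out of the integral because a qc-Einstein manifold of dimension eleven and higher has constant normalized qc scalar curvature. You have, if anything, been slightly more explicit than the paper about why $n>1$ is needed for the constancy of $S$.
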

For the quaternionic Heisenberg group with its standard qc structure, see \cite{IMV} and \cite{IV2}, the above Corollary gives the following result. The point here is the precise value of the constant $c_n$ since even the more general Calder\'on-Zygmund  $L^p$ version is well known to hold on nilpotent Lie groups, see \cite{Fo77} for an excellent overview.
\begin{cor}\label{t:lapl-hess est on heis}
 Let $(\QH, \tilde\Theta)$ be the $(4n+3)$-dimensional Heisenberg group equipped with its standard qc structure.  For any $f\in \mathcal{C}_o^\infty (\QH)$ we have
\begin{equation}\label{e:lapl-hess est on heis}
\norm{ \bi^2 f }_{L^2(\QH)}\leq c_n\,\norm{ \lap f }_{L^2(\QH)} , \qquad c_n= \sqrt{1+\frac {1}{n}}.
\end{equation}
\end{cor}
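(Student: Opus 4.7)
The plan is to derive the corollary as an immediate consequence of the preceding qc-Einstein estimate \eqref{e:sasakian lapl-hess est} once we specialize to the standard qc structure on $\QH$. First I would recall from \cite{IMV, IV2} that the quaternionic Heisenberg group equipped with its standard 1-form $\tilde\Theta$ is qc-Einstein with vanishing torsion $T^0 = U = 0$ and, moreover, has vanishing normalized qc scalar curvature, $S = 0$. This is the key structural input: $\QH$ is the flat model of qc geometry, and its Biquard connection has zero Ricci tensor. Hence the hypothesis of Theorem \ref{t:lapl-hess est} and of the preceding corollary is satisfied on $\QH$ for any $f\in \mathcal{C}_o^\infty(\QH)$.

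Next I would check that the proof of \eqref{e:sasakian lapl-hess est} transfers from the compact setting to the non-compact Heisenberg setting when applied to $f\in \mathcal{C}_o^\infty(\QH)$. The only tool from the compact theory that enters the derivation of Theorem \ref{t:lapl-hess est} is the divergence identity in Proposition \ref{div}; for compactly supported test functions this identity is valid on any qc manifold, because all boundary terms vanish by the support condition. Thus the chain of integrations by parts in Lemmas \ref{gr1} and \ref{gr2}, and hence the inequality \eqref{e:cordes2}, apply verbatim on $\QH$.

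The computational step is then trivial: plug $T^0 = U = 0$ and $S = 0$ into \eqref{e:sasakian lapl-hess est} (equivalently into \eqref{e:cordes2}) to obtain
\begin{equation*}
\int_{\QH} |\lap f|^2 \, \Vol \;\geq\; \frac{n}{n+1} \int_{\QH} |\bi^2 f|^2 \, \Vol.
\end{equation*}
Rearranging gives $\int_{\QH} |\bi^2 f|^2 \Vol \leq \frac{n+1}{n} \int_{\QH} |\lap f|^2 \Vol$, and taking square roots yields the stated inequality with the sharp constant $c_n = \sqrt{1 + 1/n}$.

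There is essentially no real obstacle here; the only item that deserves verification is the claim that the standard qc structure on $\QH$ indeed has $S = 0$, which is a direct computation from the structure equations and is recorded in the references. The corollary is therefore a specialization, not a new estimate, and its value lies in identifying the explicit sharp constant for the Heisenberg Hessian–sub-Laplacian inequality used in the $\mathcal{C}^{1,\alpha}$ regularity theory for the $p$-sub-Laplacian.
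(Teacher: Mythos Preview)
Your proposal is correct and follows essentially the same route as the paper: the corollary is obtained by specializing the qc-Einstein inequality \eqref{e:sasakian lapl-hess est} to the flat model $\QH$, where $T^0=U=0$ and $S=0$, so that only the term $\frac{n}{n+1}\int|\nabla^2 f|^2$ survives on the right-hand side. Your remark that the integration-by-parts identities remain valid on the non-compact $\QH$ for compactly supported $f$ is a reasonable point to make explicit, though the paper treats this as understood.
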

As a consequence of the above estimate, \cite{DoMa05} and \cite{Do08} which generalize Cordes' results to the sub-Riemannian setting it follows that for
\begin{equation}\label{e:p-lapl regula}
2\leq p < 2+\frac {n+n\sqrt{16n^2+8n-3}}{4n^2+2n-1}
\end{equation}
a p-harmonic function on an open set $\Omega\subset \QH$  on the quaternionic Heisenberg group of dimension $4n+3$, $f\in S^{1,p}(\QH)$, has in fact additional regularity $f\in S_{loc}^{2,2}(\QH)$. Here $S^{k,p}\, (\Om)$ denote the usual non-isotropic Sobolev spaces, see for example \cite{Fo77}.  Similarly to \cite{DoMa05} and \cite{Do08} one can then obtain  a $\mathcal{C}^{1,\alpha}$ under suitable restrictions on $p$. Obtaining the $\mathcal{C}^{1,\alpha}$ property of the solution is in general still an open problem except in some cases, see \cite{Do08}, \cite{MZ-GZh} and \cite{Gar09} and references therein. The first $\mathcal{C}^{1,\alpha}$ estimate was obtained for the-Laplacian operator on the Heisenberg group \cite{Ca97}.

\end{document}